\numberwithin{equation}{section}
\newtheorem{theorem}{Theorem}[section]
\newtheorem{corollary}[theorem]{Corollary}
\newtheorem{lemma}[theorem]{Lemma}
\newtheorem{prop}[theorem]{Proposition}
\newcommand{\Hom}{\operatorname{Hom}}
\newcommand{\Ext}{\operatorname{Ext}}
\DeclareMathOperator{\Aut}{Aut}
\DeclareMathOperator{\rank}{rank}
\DeclareMathOperator{\GL}{GL}
\DeclareMathOperator{\PGL}{PGL}
\theoremstyle{definition}
\newtheorem{definition}{Definition}[section]
\newtheorem{remark}[definition]{Remark}
\begin{document}
	\title[Topology of moduli of parabolic connections with fixed determinant]{Topology of moduli of parabolic connections with fixed determinant}
	
\author[N. Das]{Nilkantha Das}
\address{Department of Mathematics, Birla Institute of Technology Mesra, Mesra, Ranchi 835215, India}
\email{nilkantha.das@bitmesra.ac.in}

	\author[S. Roy]{Sumit Roy}
 \address{Stat-Math Unit, Indian Statistical Institute, 203 B.T. Road, Kolkata 700 108, India.}
\email{sumitroy\_r@isical.ac.in}

\subjclass[2020]{53C03, 14D20, 14D23, 14F35, 70G45, 14H60, 57R22}
\keywords{Parabolic connections, Moduli space, Fundamental group, Homotopy group, Hodge structure}
	
	\begin{abstract}
		Let $X$ be a compact Riemann surface of genus $g \geq 2$ and $D\subset X$ be a fixed finite subset. Let $\xi$ be a line bundle of degree $d$ over $X$. Let $\mathcal{M}(\alpha, r, \xi)$ (respectively, $\mathcal{M}_{\mathrm{conn}}(\alpha, r, \xi)$) denote the moduli space of stable parabolic bundles (respectively, parabolic connections) of rank $r$ $(\geq 2)$, determinant $\xi$ and full flag generic parabolic weight type $\alpha$. We show that $
  \pi_k(\mathcal{M}_{\mathrm{conn}}(\alpha, r, \xi)) \cong \pi_k(\mathcal{M}(\alpha, r, \xi)) $ for $k \leq2(r-1)(g-1)-1$. As a consequence, we deduce that the moduli space $\mathcal{M}_{\mathrm{conn}}(\alpha, r, \xi)$ is simply connected. We also show that the Hodge structures on the torsion-free parts of both the cohomologies $H^k(\mathcal{M}_{\mathrm{conn}}(\alpha, r, \xi),\mathbb{Z})$ and $H^k(\mathcal{M}(\alpha, r, \xi),\mathbb{Z})$ are isomorphic for all $k\leq 2(r-1)(g-1)+1$.
	\end{abstract}
	\maketitle
	
	\section{Introduction}

 In this article, we consider the moduli space of parabolic connections over a compact Riemann surface $X$ of genus $g \geq 2$, which is the moduli space of parabolic $\Lambda$-modules (see \cite{A17}), where $\Lambda$ is the sheaf of differential operators. In \cite{I13}, Inaba also constructed the moduli space of parabolic connections over a compact Riemann surface.

 Let $D \subset X$ be a fixed finite subset of $X$. A \textit{parabolic vector bundle} $E_*$ over $X$ is a holomorphic vector bundle $E$ with a weighted filtration over each point of $D$. A \textit{parabolic connection with fixed determinant $\xi$} is a pair $(E_*,\nabla)$  such that $E_*$ is a parabolic bundle with determinant $\xi$  and $\nabla: E \longrightarrow E \otimes K(D)$ is a logarithmic connection on $E$, where $K$ is the canonical line bundle over $X$. 

Next we consider the moduli space of semistable parabolic connections over $X$ (see \cite{B02, I13, S22}). This moduli space has singularities in general, but if we consider that the weights are generic, the moduli space is smooth. In particular, the moduli space of stable parabolic connections with generic weight is smooth. We consider the full flag filtration over each point of $D$. Here the generic set of weights is given by $\alpha =\{\alpha_i(p) \in [0,1) \  \big\vert \  1\leq i \leq r,\  p \in D\}$, where $r$ denotes the rank of the vector bundle.

 Let $\xi$ be a line bundle over $X$ of degree $d$ and let $\mathcal{M}(\alpha, r, \xi)$ denote the moduli space of stable parabolic bundles of rank $r$ $(\geq 2)$, determinant $\xi$ and full flag generic weights $\alpha$. It is a smooth projective rational complex variety (cf. \cite{BY99}). As a consequence, $\mathcal{M}(\alpha, r, \xi)$ is simply connected (cf. \cite[XI, Corollary 1.2]{SGA1}, and \cite[Corollary 4.18]{D01}). In this article, we are interested in $\mathcal{M}_{\mathrm{conn}}(\alpha, r, \xi)$, the moduli space of stable parabolic connections of fixed determinant $\xi$. Consider the forgetful map
 \[
 \pi : \mathcal{U}_{\mathrm{conn}}(\alpha, r, \xi) \longrightarrow \mathcal{M}(\alpha, r, \xi),
 \]
 where $\mathcal{U}_{\mathrm{conn}}(\alpha, r, \xi) \subset \mathcal{M}_{\mathrm{conn}}(\alpha, r, \xi)$ denotes the open subset containing all stable parabolic connections $(E_*,\nabla)$ whose underlying parabolic bundle $E_*$ is stable (see \ref{forgetful}). The map $\pi$ is, in fact, shown to be a surjective equidimensional map with connected and contractible fibers. Thus $\mathcal{U}_{\mathrm{conn}}(\alpha, r, \xi)$ deformation retracts to $\mathcal{M}_{\mathrm{conn}}(\alpha, r, \xi)$. In \Cref{dimensioncomputation}, the codimension of the complement of $\mathcal{U}_{\mathrm{conn}}(\alpha, r, \xi)$ in $\mathcal{M}_{\mathrm{conn}}(\alpha, r, \xi)$ is computed. It is shown to be large enough to conclude
 \[
  \pi_k(\mathcal{M}_{\mathrm{conn}}(\alpha, r, \xi)) \cong \pi_k(\mathcal{M}(\alpha, r, \xi))
  \]
 for $1\leq k \leq 2(r-1)(g-1)$ (see Theorem \ref{main}).
 As a consequence, the moduli space $\mathcal{M}_{\mathrm{conn}}(\alpha, r, \xi)$ is simply connected (see Corollary \ref{cor}). We also prove that if either $r\geq 3$ or $g\geq3$,  then the natural map $$\pi_2(\mathcal{M}(\alpha, r, \xi))\longrightarrow H_2(\mathcal{M}(\alpha, r, \xi))$$ induced by the Hurewicz map of $\mathcal{M}(\alpha, r, \xi)$ is an isomorphism. 
 
In \Cref{mhs}, we consider the Hodge structures of the cohomology groups of $\mathcal{M}_{\mathrm{conn}}(\alpha, r, \xi)$ with $\mathbb{Z}$-coefficients. In Theorem \ref{mhstheorem}, we show that mixed Hodge structure of torsion-free part $H^i(\mathcal{M}_{\mathrm{conn}}(\alpha, r, \xi),\mathbb{Z})/\mathrm{torsion}$ is isomorphic to the Hodge struture of $H^i(\mathcal{M}(\alpha, r, \xi),\mathbb{Z})/\mathrm{torsion}$ for all $i\leq 2(r-1)(g-1)+1$. Since the moduli space $\mathcal{M}(\alpha, r, \xi)$ of stable parabolic bundles with fixed determinant $\xi$ is smooth projective, the Hodge structure on $H^i(\mathcal{M}(\alpha, r, \xi),\mathbb{Z})/\mathrm{torsion}$ is pure of weight $i$. Thus, it follows that the Hodge structure on  $H^i(\mathcal{M}_{\mathrm{conn}}(\alpha, r, \xi),\mathbb{Z})/\mathrm{torsion}$ is pure of weight $i$ for all $i\leq 2(r-1)(g-1)+1$.
 
\section{Preliminaries}
 In this section, we first recall some definitions regarding parabolic bundles over a curve, see \cite{MY92,MS80} for more details.

 Let $X$ be a compact Riemann surface and let $D = \{p_1,\dots, p_n\} $ be a subset of $n$ distinct marked points of $X$. The subset $D$ is fixed throughout the note. We will make an abuse of notation denoting the divisor $\sum \limits_{i=1}^n p_i$ by the same letter $D$. 
	\subsection{Parabolic bundles}
	\begin{definition}\label{parabolic}
		A \textit{parabolic bundle} $E_*$ on $X$ is a holomorphic vector bundle $E$ of rank $r$ $(\geq 2)$ on $X$ together with a parabolic structure along $D$, i.e. for every point $p \in D$, we have
		\begin{enumerate}
			\item a strictly decreasing filtration of subspaces of the fiber 
			\[
			E_p = E_{p,1}\supsetneq E_{p,2} \supsetneq \dots \supsetneq E_{p,r_p} \supsetneq E_{p,r_p+1} =\{0\},
			\]
			\item a sequence of real numbers (parabolic weights) satisfying 
			\[
			0\leq \alpha_1(p) < \alpha_2(p) < \dots < \alpha_{r_p}(p) < 1,
			\]
		\end{enumerate}
		where $r_p \in \{1,\dots,r\}$. 
	\end{definition}
	
	The collection of all such weights $\{(\alpha_1(p),\alpha_2(p),\dots,\alpha_{r_p}(p))\}_{p\in D}$ is denoted by $\alpha$ for a fixed parabolic structure. The collection $\alpha$ is said to have \textit{full flags} if 
	\[\mathrm{dim}(E_{p,i}/E_{p,i+1}) = 1
	\] 
	for every $1\leq i \leq r_p$ and for every point $p\in D$, or equivalently $r_p=r$ for every $p\in D$.
	
	The \textit{parabolic degree} of a parabolic bundle $E_*$ with set of weights $\{\alpha_i(p)\}_{p\in D}$ is defined as
	\[
	\operatorname{pardeg}(E_*) \coloneqq \deg(E)+ \sum\limits_{p\in D}\sum\limits_{i=1}^{r_p} \alpha_i(p) 
	\]
	(cf. \cite[Definition $1.11$, p.$214$]{MS80}, \cite[p. $78$]{MY92}), and the real number
	\[
	\mu_{\mathrm{par}}(E_*) \coloneqq \frac{\text{pardeg}(E_*)}{\mathrm{rank}(E)} \in \mathbb{R}
	\]
	is called the \textit{parabolic slope}.

    Consider any holomorphic subbundle $F \subset E$. For every $p \in D$, by intersecting the filtration of subspaces of $E_p$ with $F_p$, we can obtain a filtration of subspaces of the fiber $F_p$. The weight of a subspace $A \subset F_p$ that appears in this filtration is given by
    \[
    \mathrm{max}\{\alpha_i(p) \mid A \subset E_{p,i} \cap F_p\}.
    \]
    It gives an induced parabolic structure on the subbundle $F$ and we denote the parabolic subbundle by $F_*$.
	
\begin{definition}
 A parabolic vector bundle $E_*$ is called \textit{stable} (respectively, \textit{semistable}) if for every nonzero proper holomorphic subbundle $F \subset E$ the following inequality holds:
	\[
	\mu_{\mathrm{par}}(F_*) < \mu_{\mathrm{par}}(E_*) \hspace{0.2cm} (\mathrm{respectively, } \hspace{0.2cm} \mu_{\mathrm{par}}(F_*) \leq \mu_{\mathrm{par}}(E_*) ).
	\]  
\end{definition}
	
	We denote by $\mathcal{M}(\alpha, r, d)$ the moduli space of semistable parabolic bundles over $X$ of rank $r$, degree $d$ and parabolic structure $\alpha$. In \cite{MS80}, Mehta and Seshadri constructed this moduli space using Mumford's geometric invariant theory. They also showed that it is a normal complex projective variety of dimension
	\[
	\dim\mathcal{M}(\alpha, r, d) = r^2(g-1) + 1 + \dfrac{n(r^2-r)}{2}, 
	\]
	where the last summand of the right-hand side comes from the assumption that the filtration over each point $p\in D$ has full flags. Moreover, a weight type $\alpha$ is called \textit{generic} if semistability implies stability for such weights. The stable locus of $\mathcal{M}(\alpha, r, d)$ is contained in the smooth locus of the moduli space. So, $\mathcal{M}(\alpha, r, d)$ is a smooth variety if $\alpha$ is chosen to be generic. From now onward, we assume that the parabolic weights are generic and the parabolic structure has full flags at each parabolic point $p\in D$.

 	Let $\xi$ be a line bundle over $X$ of degree $d$. Consider the determinant map from $\mathcal{M}(\alpha, r, d)$ to $\mathrm{Pic}^d(X)$, the space of degree $d$ line bundles over $X$:
  \begin{align*}
      \det : \mathcal{M}(\alpha, r, d) &\longrightarrow \mathrm{Pic}^d(X) \\
      E_* &\mapsto \det(E),
  \end{align*}
   where $E$ is the underline vector bundle of the parabolic bundle $E_*$.
  Then, 
  \[
  \mathcal{M}(\alpha, r, \xi) \coloneqq \mathrm{det}^{-1}(\xi)
  \]
  denotes the moduli space of stable parabolic bundles over $X$ of fixed determinant $\xi$. It is a smooth complex projective variety of dimension
	\begin{align*}
	    \dim \mathcal{M}(\alpha, r, \xi) &= \dim \mathcal{M}(\alpha, r, d) - g\\
     &= (g-1)(r^2-1) + \dfrac{n(r^2-r)}{2}.
	\end{align*}

 \begin{definition}
		A \textit{parabolic morphism} $\varphi : E_* \to E^\prime_*$ between two parabolic bundles $E_*$ and $E_*'$ is a morphism of underlying vector bundles $E$ and $E'$ which satisfies
		the following: at every $p \in D$, we have 
		\[
		\alpha_i(p) > \alpha_j^\prime(p) \implies \varphi(E_{p,i}) \subseteq E_{p,j+1}^\prime.
		\]
		Such a morphism is called \textit{strongly parabolic} if
		\[ \alpha_i(p) \geq \alpha_j^\prime(p) \implies \varphi(E_{p,i}) \subseteq E_{p,j+1}^\prime
		\]
		at every $p \in D$.
	\end{definition}
 We denote the sheaf of parabolic endomorphisms of $E_*$ by $\mathrm{PEnd}(E_*)$. Similarly, the sheaf of strongly parabolic endomorphisms will be denoted by $\mathrm{SPEnd}(E_*)$. For a parabolic bundle $E_*$, we will denote the sheaf of traceless parabolic endomorphisms (respectively, strongly parabolic endomorphisms) by $\mathrm{PEnd}_0(E_*)$ (respectively, $\mathrm{SPEnd}_0(E_*)$). In this context, we will use the parabolic version of the Serre duality, i.e.,
  \[
  H^1(X, \mathrm{PEnd}_0(E_*)) \cong H^0(X, \mathrm{SPEnd}_0(E_*) \otimes K(D))^\vee,
  \]
where $K$ is the canonical line bundle over $X$ and $K(D) = K \otimes \mathcal{O}(D)$.
	
	\subsection{Parabolic connections}
 
	A \textit{logarithmic connection} on a holomorphic vector bundle $E$ over $X$, singular over $D$ is a holomorphic differential operator
	\[
	\nabla : E \to E \otimes K(D) 
	\]
	satisfying the Leibniz identity
	\begin{equation}\label{leibniz}
	    \nabla(fs)= f\nabla(s) + df\otimes s,
	\end{equation}
	where $f$ is a locally defined holomorphic function on $X$ and $s$ is a locally defined holomorphic section of $E$. For more details about the logarithmic connection, see \cite{D70} and \cite{BGKHME87}. 

 By the Poincaré adjunction formula \cite[p. $146$]{GH78}, the fiber of $K(D)$ over a point $p\in D$ is identified with $\mathbb{C}$. To explain this in detail, consider the following morphism
 \begin{align*}
 \mathbb{C} &\longrightarrow \left.K(D)\right|_p \\ 
 x &\mapsto x\cdot \frac{dz}{z}(p),
 \end{align*}
where $z$ is coordinate function on $X$ defined on an open neighbourhood of $p\in D$ such that $z(p)=0$. It gives us the required isomorphism.
Let $E$ be a holomorphic vector bundle with a logarithmic connection $\nabla$. From \eqref{leibniz}, we get that the following composition of morphisms
\[
E \xlongrightarrow[]{\nabla} E \otimes K(D) \xlongrightarrow[]{} (E \otimes K(D))_p \xlongrightarrow[]{\sim} E_p
\]
is $\mathcal{O}_X$-linear. The isomorphism $(E \otimes K(D))_p \xlongrightarrow[]{\sim} E_p$ follows from the fact that $K(D)_p \cong \mathbb{C}$. Thus, we get a $\mathbb{C}$-linear map
\[
\mathrm{Res}(\nabla,p) : E_p \longrightarrow E_p.
\]
We call this $\mathbb{C}$-linear endomorphism the \textit{residue} of $\nabla$ at $p\in D$ (see \cite{D70}). Following \cite[Theorem $3$]{O82}, a logarithmic connection $\nabla$, singular over $D$, satisfies
\begin{equation}\label{residue}
    \deg(E) + \sum_{p\in D} \mathrm{trace}\big(\mathrm{Res}(\nabla, p)\big) = 0,
\end{equation}
where $\mathrm{trace}(\mathrm{Res}(\nabla, p))$ denotes the trace of the residue $\mathrm{Res}(\nabla, p) \in \mathrm{End}(E_p)$, for all $p \in D$.

	\begin{definition}\label{pconn}
		A \textit{parabolic connection} on $E_*$ over $X$ is a logarithmic connection $\nabla$ on $E$, singular over $D$, satisfying the following conditions:
		\begin{enumerate}
			\item For all $p \in D$, the logarithmic connection $\nabla$ on the fiber $E_p$ satisfies 
			\[
			\nabla(E_{p,i}) \subseteq E_{p,i} \otimes \left.K(D)\right|_p
			\]
			for all $i \in \{1,2,\dots ,r\}$.
			\item For every $p \in D$ and for every $i \in \{1,\dots , r\}$ the action of the residue $\mathrm{Res}(\nabla,p) \in \mathrm{End}(E_p)$ on the quotient $E_{p,i}/E_{p,i+1}$ is the multiplication by $\alpha_{i}(p)$, where $\alpha_{i}(p)$'s are the weights over $p \in D$. Since the residue $\mathrm{Res}(\nabla,p)$ preserves the filtration over every $p\in D$, it acts on every quotient space.
		\end{enumerate}
	\end{definition}	
	We will denote a parabolic connection by $(E_*,\nabla)$. For a holomorphic subbundle $F \subset E$, the induced parabolic subbundle $F_* \subset E_*$ is called $\nabla$\textit{-invariant} if $\nabla(F) \subseteq F \otimes K(D)$.

    \begin{remark}\label{degreezero}
        \noindent
Note that by Definition \ref{pconn}, $$\operatorname{trace}(\operatorname{Res}(\nabla, p)) = \sum_{i=1}^{r} \alpha_{i}(p)$$ for each $p$. Substituting into equation \eqref{residue}, we obtain
\[
\deg(E) + \sum_{p \in D} \sum_{i=1}^{r} \alpha_{i}(p) = 0,
\]
that is, $\operatorname{pardeg}(E_*) = 0$.

    \end{remark}
	
	\begin{definition}
		A parabolic connection $(E_*,\nabla)$ is said to be \textit{stable} (respectively, \textit{semistable}) if for every nonzero proper $\nabla$-invariant parabolic subbundle $F_* \subset E_*$, the following inequality holds:
		\[
		\mu_{\mathrm{par}}(F_*) < \mu_{\mathrm{par}}(E_*) \hspace{0.3cm} (\mathrm{respectively,}\hspace{0.05cm} \mu_{\mathrm{par}}(F_*) \leq \mu_{\mathrm{par}}(E_*)).
		\]
	\end{definition}
 Note that the stability of a parabolic connection $(E_*,\nabla)$ doesn't imply the stability of the underlying parabolic bundle $E_*$. Let $\mathcal{M}_{\mathrm{conn}}(\alpha,r, d)$ denote the moduli space of stable parabolic connections of rank $r$, degree $d$ and weight type $\alpha$ (assuming full flag structure). From \cite[Theorem $2.1$]{I13}, $\mathcal{M}_{\mathrm{conn}}(\alpha,r, d)$ is a smooth quasi-projective irreducible variety over $\mathbb{C}$ of dimension
\[
\dim\mathcal{M}_{\mathrm{conn}}(\alpha, r, d) = 2\dim\mathcal{M}(\alpha, r, d) = 2r^2(g-1) + 2 + n(r^2-r).
\]
Similarly, the moduli space  $\mathcal{M}_{\mathrm{conn}}(\alpha,r, \xi)$ of stable parabolic connections of rank $r$, fixed determinant $\xi$ and weight type $\alpha$ is a smooth quasi-projective irreducible complex variety of dimension 
	\[
	\dim\mathcal{M}_{\mathrm{conn}}(\alpha, r, \xi)= 2\dim \mathcal{M}(\alpha, r, \xi) = 2(g-1)(r^2-1) + n(r^2-r).
	\]

	\section{Dimension Computations}\label{dimensioncomputation}
 In this section, our goal is to compute the dimensions of certain subsets of the moduli space $\mathcal{M}_{\mathrm{conn}}(\alpha, r, \xi)$. Let $$\mathcal{U}_{\mathrm{conn}}(\alpha, r, \xi) \subset \mathcal{M}_{\mathrm{conn}}(\alpha, r, \xi)$$ be the subset containing stable parabolic connections $(E_*,\nabla)$ whose underlying parabolic bundle $E_*$ is stable. It is an open subvariety of $\mathcal{M}_{\mathrm{conn}}(\alpha, r, \xi)$. Therefore, we have the following well-defined forgetful map
	\begin{equation}\label{forgetful}
	\pi : \mathcal{U}_{\mathrm{conn}}(\alpha, r, \xi) \longrightarrow \mathcal{M}(\alpha, r, \xi)
	\end{equation}
    that sends a parabolic connection $(E_*,\nabla)$ to $E_*$, that is, $\pi$ forgets the parabolic connection. Since $\operatorname{pardeg}(E_*)=0$ as discussed in Remark \ref{degreezero}, the surjectivity of $\pi$ follows from \cite[Theorem $3.1$]{B02}. For every $E_* \in \mathcal{M}(\alpha, r, \xi)$, we are interested in computing the dimensions of the fiber $\pi^{-1}(E_*)$.
	
	\begin{lemma}
		For every parabolic bundle $E_* \in \mathcal{M}(\alpha, r, \xi)$, the fiber $\pi^{-1}(E_*)$ forms an affine space over $H^0(X,\mathrm{SPEnd}_0(E_*)\otimes K(D))$.
	\end{lemma}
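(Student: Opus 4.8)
The plan is to show that the fiber $\pi^{-1}(E_*)$ is either empty or a torsor under the vector space $H^0(X,\mathrm{SPEnd}_0(E_*)\otimes K(D))$; since surjectivity of $\pi$ is already granted, this yields the claim. The key point is the classical observation that the difference of two logarithmic connections on a fixed vector bundle $E$ with the same residues at each $p \in D$ is an $\mathcal{O}_X$-linear map $E \to E\otimes K(D)$, because the Leibniz terms $df\otimes s$ cancel. Concretely, I would fix a parabolic connection $\nabla_0 \in \pi^{-1}(E_*)$ (which exists by the cited surjectivity result) as a base point, and for any other $\nabla \in \pi^{-1}(E_*)$ set $\Phi := \nabla - \nabla_0$. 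Conversely, given $\nabla_0$ and any such $\Phi$, the operator $\nabla_0 + \Phi$ is again a logarithmic connection on $E$. So the fiber is an affine space modeled on the space of all $\Phi \in H^0(X,\mathrm{End}(E)\otimes K(D))$ for which $\nabla_0 + \Phi$ still defines a \emph{parabolic} connection on $E_*$ with the \emph{same} weights and fixed determinant $\xi$; it remains to identify that space of admissible $\Phi$ with $H^0(X,\mathrm{SPEnd}_0(E_*)\otimes K(D))$.

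The core of the argument is therefore the local analysis at each $p \in D$, together with the trace condition. First, condition (1) of Definition \ref{pconn} forces $\nabla(E_{p,i}) \subseteq E_{p,i}\otimes K(D)|_p$; since $\nabla_0$ already satisfies this, the same must hold for $\Phi$, i.e. $\Phi$ preserves the flag, $\Phi(E_{p,i}) \subseteq E_{p,i}\otimes K(D)|_p$. Second, condition (2) pins down the residue: $\mathrm{Res}(\nabla_0,p)$ and $\mathrm{Res}(\nabla,p)$ must both act as multiplication by $\alpha_{p,i}$ on $E_{p,i}/E_{p,i+1}$. Because $\mathrm{Res}(\nabla_0+\Phi,p) = \mathrm{Res}(\nabla_0,p) + \Phi_p$ under the identification $K(D)|_p\cong\mathbb{C}$, this is equivalent to demanding that the induced endomorphism $\Phi_p$ act as $0$ on each graded piece $E_{p,i}/E_{p,i+1}$; equivalently $\Phi_p(E_{p,i}) \subseteq E_{p,i+1}$ for all $i$. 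Comparing with the definition of a strongly parabolic morphism (the condition $\alpha_i(p) \geq \alpha_j(p) \Rightarrow \varphi(E_{p,i})\subseteq E_{p,j+1}$, which in the full-flag case is exactly $\varphi(E_{p,i})\subseteq E_{p,i+1}$ since the weights are strictly increasing), we see that $\Phi$ is precisely a strongly parabolic homomorphism $E_* \to E_*\otimes K(D)$, i.e. $\Phi \in H^0(X,\mathrm{SPEnd}(E_*)\otimes K(D))$. Finally, the fixed-determinant condition: $\det(E)=\xi$ is fixed throughout, and the only additional constraint is on $\nabla$. Taking traces, $\mathrm{trace}(\nabla_0+\Phi) = \mathrm{trace}(\nabla_0) + \mathrm{trace}(\Phi)$ as logarithmic connections on $\xi = \det E$; the induced connection on $\det E$ must agree for all points of the fiber (the fixed determinant data is part of the moduli problem), which forces $\mathrm{trace}(\Phi) = 0$ as a section of $K(D)$. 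Hence $\Phi \in H^0(X,\mathrm{SPEnd}_0(E_*)\otimes K(D))$, and conversely every such $\Phi$ produces an admissible parabolic connection.

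Putting these together: the assignment $\nabla \mapsto \nabla - \nabla_0$ gives a bijection from $\pi^{-1}(E_*)$ to $H^0(X,\mathrm{SPEnd}_0(E_*)\otimes K(D))$, which is independent of the choice of base point up to translation, so $\pi^{-1}(E_*)$ carries the structure of an affine space over this vector space, as claimed. I expect the main obstacle to be bookkeeping rather than conceptual: one must be careful that the residue of $\nabla_0+\Phi$ really decomposes as $\mathrm{Res}(\nabla_0,p)+\Phi_p$ under the Poincaré-adjunction identification $K(D)|_p\cong\mathbb{C}$ (this uses the $\mathcal{O}_X$-linearity of $\Phi$, so that its restriction to the fiber is literally the endomorphism appearing in the residue), and that the full-flag hypothesis makes ``strongly parabolic'' collapse to the simple flag-shift condition used above. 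The global statement — that these pointwise conditions glue to give a sheaf-level description and that $H^0$ of that sheaf is the modeling vector space — is then immediate from the definitions of $\mathrm{SPEnd}(E_*)$ and $\mathrm{SPEnd}_0(E_*)$ recalled in Section 2.
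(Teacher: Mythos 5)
Your proposal is correct and follows essentially the same route as the paper: both arguments show that the difference of two parabolic connections in the fiber is an $\mathcal{O}_X$-linear map that is strongly parabolic (because the residues agree on the graded pieces of the flag) and traceless (because the determinant connection is fixed), and conversely that adding any section of $\mathrm{SPEnd}_0(E_*)\otimes K(D)$ preserves both conditions of the definition of a parabolic connection. Your write-up is in fact somewhat more careful than the paper's on the decomposition of the residue under the identification $\left.K(D)\right|_p\cong\mathbb{C}$ and on why the full-flag hypothesis reduces ``strongly parabolic'' to the flag-shift condition, but these are elaborations of the same argument rather than a different one.
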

	\begin{proof}
		For any parabolic bundle $E_* \in \mathcal{M}(\alpha, r, \xi)$, the fiber $\pi^{-1}(E_*)$ consists of all connections on the parabolic bundle $E_*$ with fixed determinant $\xi$. Let $\nabla, \nabla' \in \pi^{-1}(E_*)$ be two conncections in the fiber $\pi^{-1}(E_*)$. Then their difference $\nabla - \nabla'$ is an $\mathcal{O}_X$-module morphism from the underlying vector bundle $E$ to $E\otimes K(D)$ such that the difference of residues
		\[
		\mathrm{Res}(\nabla,p) -\mathrm{Res}(\nabla',p) = \mathrm{Res}(\nabla-\nabla',p)
		\]
		acts on the quotients $E_p^i/E_p^{i+1}$ as the zero morphism for all $p\in D$. Thus, for every $p \in D$,
		\[
		(\nabla -\nabla')(E_p^i)\subset E_p^{i+1} \otimes \left.K(D)\right|_p.
		\]
		Also, since the determinant of both parabolic connections are the same, we get
		\[
		\mathrm{trace}(\nabla - \nabla') = 0.
		\]
  Conversely, take
  \[
  \theta \in H^0(X, \mathrm{SPEnd}_0(E_*)\otimes K(D))
  \]
  and consider the operator $\nabla + \theta$. It is a logarithmic connection on $E$ and satisfies both the conditions in Definition \ref{pconn}. Hence, the fiber $\pi^{-1}(E_*)$ which consists of all connections on the parabolic bundle $E_*$ with fixed determinant forms an affine space over $H^0(X,\mathrm{SPEnd}_0(E_*)\otimes K(D))$.
	\end{proof}

 		Let $\mathcal{P}\textit{Conn}(E_*)$ be the space that consists of all parabolic connections $\nabla$ on $E_*$ with fixed determinant $\xi$. We note that $\mathcal{P}\textit{Conn}(E_*)$ is an affine space for the vector space $H^0(X, \mathrm{SPEnd}_0(E_*) \otimes K(D))$, where $\mathrm{SPEnd}_0(E_*) \subset \mathrm{SPEnd}(E_*)$ is the subbundle defined by the sheaf of trace zero strongly parabolic endomorphisms. Therefore,
  \[
  \mathrm{rank} (\mathrm{SPEnd}_0(E_*)) = \mathrm{rank}(\mathrm{SPEnd}(E_*)) - 1.
  \]
  The quotient space $\mathcal{P}\textit{Conn}(E_*)/\mathrm{Aut}(E_*)$ parametrizes all  parabolic connections $\nabla$ on $E_*$ such that $(E_*,\nabla) \in \mathcal{M}_{\mathrm{conn}}(\alpha, r, \xi)$. 
  Our aim is to compute the dimension of the quotient space $\mathcal{P}\textit{Conn}(E_*)/\mathrm{Aut}(E_*)$. 
	
	\begin{lemma}\label{dimension}
  For any $(E_*,\nabla) \in \mathcal{M}_{\mathrm{conn}}(\alpha, r, \xi)$,
		\[
      \mathrm{dim} \big(\mathcal{P}\textit{Conn}(E_*)/\mathrm{Aut}(E_*)\big) = (r^2-1)(g-1) + \frac{n(r^2-r)}{2}.
  \]
	\end{lemma}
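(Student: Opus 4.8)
The plan is to compute $\dim \mathcal{P}\textit{Conn}(E_*)$ and the dimension of a generic $\mathrm{Aut}(E_*)$-orbit on it, and then subtract. By the preceding lemma, $\mathcal{P}\textit{Conn}(E_*)$ is nonempty (it contains $\nabla$) and is an affine space modeled on the vector space $H^0(X, \mathrm{SPEnd}_0(E_*)\otimes K(D))$, so $\dim \mathcal{P}\textit{Conn}(E_*) = h^0(X, \mathrm{SPEnd}_0(E_*)\otimes K(D))$. The group $\mathrm{Aut}(E_*)$ is the nonempty Zariski-open subset of $H^0(X,\mathrm{PEnd}(E_*))$ cut out by invertibility, so $\dim \mathrm{Aut}(E_*) = h^0(X,\mathrm{PEnd}(E_*))$. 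Since $(E_*,\nabla)$ is a \emph{stable} parabolic connection, a standard argument shows $\mathrm{Aut}(E_*,\nabla) = \mathbb{C}^\times\cdot \mathrm{id}_E$ (the generalized eigenbundles of a $\varphi$ commuting with $\nabla$ are $\nabla$-invariant parabolic subbundles, and stability forbids proper nonzero ones); the same applies to every $\nabla'$ in the open locus $\mathcal{P}\textit{Conn}(E_*)^{\mathrm{st}}$ of connections making $(E_*,\nabla')$ stable, so on that (dense) locus all $\mathrm{Aut}(E_*)$-orbits have dimension $h^0(X,\mathrm{PEnd}(E_*)) - 1$. Hence
\[
\dim\big(\mathcal{P}\textit{Conn}(E_*)/\mathrm{Aut}(E_*)\big) = h^0(X,\mathrm{SPEnd}_0(E_*)\otimes K(D)) - h^0(X,\mathrm{PEnd}(E_*)) + 1.
\]

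Next I would rewrite the right-hand side as an Euler characteristic. Over $\mathbb{C}$ the trace gives a splitting $\mathrm{PEnd}(E_*) = \mathrm{PEnd}_0(E_*)\oplus \mathcal{O}_X$ (since the trace of a parabolic endomorphism is a constant function), so $h^0(X,\mathrm{PEnd}(E_*)) = h^0(X,\mathrm{PEnd}_0(E_*)) + 1$. Parabolic Serre duality — the traceless refinement of the isomorphism $H^1(X,\mathrm{PEnd}(E_*))^\vee\cong H^0(X,\mathrm{SPEnd}(E_*)\otimes K(D))$ recalled above — gives $H^0(X,\mathrm{SPEnd}_0(E_*)\otimes K(D))\cong H^1(X,\mathrm{PEnd}_0(E_*))^\vee$. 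Substituting, the displayed dimension becomes
\[
h^1(X,\mathrm{PEnd}_0(E_*)) - h^0(X,\mathrm{PEnd}_0(E_*)) = -\chi\big(X,\mathrm{PEnd}_0(E_*)\big).
\]

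Finally I would finish by Riemann--Roch. The sheaf $\mathrm{PEnd}_0(E_*)$ is locally free of rank $r^2-1$. For its degree, note $\mathrm{PEnd}(E_*)\subset \mathrm{End}(E) = E\otimes E^\vee$ with torsion quotient supported on $D$: for the full-flag structure with distinct weights, at each $p\in D$ the parabolic condition cuts the fiber $\mathrm{End}(E_p)$ from all $r^2$ endomorphisms down to the $\tfrac{r(r+1)}{2}$-dimensional space of flag-preserving ones, a drop of $\tfrac{r(r-1)}{2} = \tfrac{r^2-r}{2}$. Since $\deg \mathrm{End}(E) = 0$, the quotient has length $\tfrac{n(r^2-r)}{2}$, so $\deg \mathrm{PEnd}(E_*) = -\tfrac{n(r^2-r)}{2}$, and $\deg \mathrm{PEnd}_0(E_*) = \deg \mathrm{PEnd}(E_*)$ because splitting off $\mathcal{O}_X$ does not change the degree. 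Riemann--Roch then yields
\[
\chi\big(X,\mathrm{PEnd}_0(E_*)\big) = \deg \mathrm{PEnd}_0(E_*) + (r^2-1)(1-g) = -\frac{n(r^2-r)}{2} - (r^2-1)(g-1),
\]
so $-\chi\big(X,\mathrm{PEnd}_0(E_*)\big) = (r^2-1)(g-1) + \tfrac{n(r^2-r)}{2}$, which is the claimed value.

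The step I expect to be the main obstacle is the orbit-dimension bookkeeping in the first paragraph: one must check that the $\mathrm{Aut}(E_*)$-action has constant orbit dimension $h^0(X,\mathrm{PEnd}(E_*)) - 1$ on the stable-connection locus of $\mathcal{P}\textit{Conn}(E_*)$ (equivalently, that every stable parabolic connection is simple) and that the resulting quotient dimension is correct even when $E_*$ is not itself a stable parabolic bundle. When $E_*$ \emph{is} stable everything collapses: $\mathrm{Aut}(E_*) = \mathbb{C}^\times$ acts trivially on connections, $\mathcal{P}\textit{Conn}(E_*)$ is literally the fiber $\pi^{-1}(E_*)$, and its dimension equals $\dim \mathcal{M}(\alpha,r,\xi)$ by the smoothness of $\mathcal{M}(\alpha,r,\xi)$ together with the deformation-theoretic identification $T_{E_*}\mathcal{M}(\alpha,r,\xi) = H^1(X,\mathrm{PEnd}_0(E_*))$ — which cross-checks the Riemann--Roch answer. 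Everything else is routine Riemann--Roch and Serre-duality bookkeeping.
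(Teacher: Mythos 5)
Your proof is correct and follows essentially the same route as the paper: identify the quotient dimension as $h^0(X,\mathrm{SPEnd}_0(E_*)\otimes K(D))-(\dim\mathrm{Aut}(E_*)-1)$ using that stable parabolic connections are simple, convert it to $-\chi(\mathrm{PEnd}_0(E_*))$ via parabolic Serre duality, and finish with Riemann--Roch. The only cosmetic difference is that the paper computes $\chi(\mathrm{PEnd}_0(E_*))$ through the short exact sequence $0\to\mathrm{PEnd}_0(E_*)\to\mathrm{End}_0(E)\to\mathcal{F}_D\to 0$ with a skyscraper sheaf of length $\tfrac{n(r^2-r)}{2}$ (citing Boden--Hu), whereas you compute $\deg\mathrm{PEnd}_0(E_*)=-\tfrac{n(r^2-r)}{2}$ directly; these are the same calculation.
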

	\begin{proof}
 Since the Lie algebra of the automorphism group $\mathrm{Aut}(E_*)$ of the underlying parabolic bundle $E_*$ is isomorphic to $H^0(X, \mathrm{PEnd}(E_*))$, we get
  \begin{equation}\label{automorphism}
  \mathrm{dim} \mathrm{Aut}(E_*) = h^0(X, \mathrm{PEnd}_0(E_*)) +1.
  \end{equation}
  Since the pair $(E_*,\nabla)$ is stable, the isotropy for the natural action of the global automorphism group $\mathrm{Aut}(E_*)$ on $\mathcal{P}\textit{Conn}(E_*)$ is the subgroup defined by scalar automorphisms of $E_*$, i.e. of the form $\lambda \cdot \mathrm{Id}_{E_*}$ with $\lambda \in \mathbb{C}^*$. Therefore, the dimension of the space of all isomorphism classes of parabolic connections $\nabla$ on $E_*$ such that $(E_*,\nabla) \in \mathcal{M}_{\mathrm{conn}}(\alpha, r, \xi)$ is given by
  \begin{align}\label{dim1}
  \begin{split}
      \mathrm{dim} \big(\mathcal{P}\textit{Conn}(E_*)/\mathrm{Aut}(E_*)\big) &= h^0(X, \mathrm{SPEnd}_0(E_*) \otimes K(D)) - (\mathrm{dim}\mathrm{Aut}(E_*) - 1)\\
      &= h^1(X,\mathrm{PEnd}_0(E_*)) - h^0(X, \mathrm{PEnd}_0(E_*)) \\
      &= -\chi(\mathrm{PEnd}_0(E_*)).
      \end{split}
  \end{align}
  There is a skyscraper sheaf $\mathcal{F}_D$ supported on the set of parabolic points $D$ such that 
  \[
  0 \longrightarrow \mathrm{PEnd}_0(E_*) \longrightarrow \mathrm{End}_0(E) \longrightarrow \mathcal{F}_D \longrightarrow 0,
  \]
  is a short exact sequence of sheaves, where $\mathrm{End}_0(E)$ consists of trace zero endomorphisms of the underlying bundle $E$. Thus, we get
  \begin{equation}\label{euler}
  \chi(\mathrm{PEnd}_0(E_*)) =   \chi(\mathrm{End}_0(E)) - \chi(\mathcal{F}_D).
  \end{equation}
 Since $\mathrm{rank}(\mathrm{End}_0(E)) = \mathrm{rank}(\mathrm{End}(E)) -1 = r^2-1$ and $\deg \mathrm{End}_0(E) = 0$, the Riemann-Roch formula gives
  \begin{equation}\label{euler1}
      \chi(\mathrm{End}_0(E)) = (r^2-1)(1-g).
  \end{equation}
  Since we consider only full flag parabolic structure at each parabolic point, we get from \cite[Lemma $2.4$]{BH95} that
  \begin{equation}\label{euler2}
      \chi(\mathcal{F}_D) = \frac{n(r^2-r)}{2}.
  \end{equation}
  Thus, from \eqref{euler},\eqref{euler1} and \eqref{euler2}, we get
  \[
  \chi(\mathrm{PEnd}_0(E_*)) = (r^2-1)(1-g) - \frac{n(r^2-r)}{2}.
  \]
  Therefore, from \eqref{dim1}, we conclude that
  \begin{equation}
      \mathrm{dim} \big(\mathcal{P}\textit{Conn}(E_*)/\mathrm{Aut}(E_*)\big) = (r^2-1)(g-1) + \frac{n(r^2-r)}{2}.
  \end{equation}
This completes the proof.	
\end{proof}

\begin{corollary}\label{dimension1}
    For any $E_* \in \mathcal{M}(\alpha,r,\xi)$, the fiber $\pi^{-1}(E_*)$ has dimension
    \[
    \dim \pi^{-1}(E_*) = (r^2-1)(g-1) + \frac{n(r^2-r)}{2}.
    \]
      In particular, the dimension of the fibers $\pi^{-1}(E_*)$ doesn't depend on the parabolic bundle $E_*$.
\end{corollary}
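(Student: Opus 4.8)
The plan is to read the dimension off directly from the two preceding results, so the proof is essentially a bookkeeping exercise. First I would record the point that makes the fiber transparent: because $E_*$ is a \emph{stable} parabolic bundle, every parabolic connection on $E_*$ with determinant $\xi$ is automatically a stable parabolic connection — any $\nabla$-invariant parabolic subbundle is in particular a parabolic subbundle, and stability of $E_*$ already rules out $\mu_{\mathrm{par}}(F_*)\geq\mu_{\mathrm{par}}(E_*)$ for every nonzero proper $F_*$. Hence $(E_*,\nabla)\in\mathcal{U}_{\mathrm{conn}}(\alpha,r,\xi)$ for all $\nabla\in\mathcal{P}\textit{Conn}(E_*)$, and the fiber $\pi^{-1}(E_*)$, viewed inside the moduli space, is exactly the set of isomorphism classes of such pairs, i.e.\ the quotient $\mathcal{P}\textit{Conn}(E_*)/\mathrm{Aut}(E_*)$. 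Nonemptiness of this fiber, so that ``dimension'' is meaningful, is the surjectivity of $\pi$ recalled just after \eqref{forgetful}.

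Next I would invoke stability of $E_*$ once more: $\mathrm{Aut}(E_*)=\mathbb{C}^*\cdot\mathrm{Id}_{E_*}$, and scalar automorphisms act trivially by conjugation on connections, so $\mathrm{Aut}(E_*)$ acts trivially on $\mathcal{P}\textit{Conn}(E_*)$ and the quotient map is an isomorphism of varieties. Therefore $\dim\pi^{-1}(E_*)=\dim\mathcal{P}\textit{Conn}(E_*)=\dim\big(\mathcal{P}\textit{Conn}(E_*)/\mathrm{Aut}(E_*)\big)$, and Lemma \ref{dimension} evaluates the right-hand side to $(r^2-1)(g-1)+\tfrac{n(r^2-r)}{2}$. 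Equivalently, one may argue through the first lemma of this section: $\pi^{-1}(E_*)$ is an affine space modelled on $H^0(X,\mathrm{SPEnd}_0(E_*)\otimes K(D))$, whose dimension equals $h^1(X,\mathrm{PEnd}_0(E_*))=-\chi(\mathrm{PEnd}_0(E_*))$ by parabolic Serre duality together with the vanishing $h^0(X,\mathrm{PEnd}_0(E_*))=0$ for stable $E_*$, and $\chi(\mathrm{PEnd}_0(E_*))$ was already computed in the proof of Lemma \ref{dimension}.

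The independence of $E_*$ is then immediate, since the closed formula $(r^2-1)(g-1)+\tfrac{n(r^2-r)}{2}$ involves no datum attached to the particular stable parabolic bundle $E_*$. There is no genuine obstacle here; the only point demanding a moment's care is to be sure that one is computing the dimension of the fiber inside the \emph{moduli space} $\mathcal{U}_{\mathrm{conn}}(\alpha,r,\xi)$ rather than of the raw affine space of connections on a fixed underlying bundle $E$ — and these agree precisely because stability of $E_*$ forces $\mathrm{Aut}(E_*)$ to act trivially on $\mathcal{P}\textit{Conn}(E_*)$.
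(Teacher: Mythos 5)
Your proof is correct and follows the same route as the paper: the paper's own proof consists precisely of the observation that $\pi^{-1}(E_*)$ coincides with $\mathcal{P}\textit{Conn}(E_*)/\mathrm{Aut}(E_*)$ followed by an appeal to Lemma \ref{dimension}. The extra details you supply (stability of $E_*$ forcing stability of every pair $(E_*,\nabla)$, nonemptiness via surjectivity of $\pi$, and $\mathrm{Aut}(E_*)=\mathbb{C}^*\cdot\mathrm{Id}_{E_*}$ acting trivially) are all correct and merely make explicit what the paper leaves implicit.
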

\begin{proof}
    Note that $\pi^{-1}(E_*)$ is the same as $\mathcal{P}\textit{Conn}(E_*)/\mathrm{Aut}(E_*)$. The rest follows from Lemma \ref{dimension}.
\end{proof}

We now move towards the dimension computation of the complement of the open subvariety $\mathcal{U}_{\mathrm{conn}}(\alpha, r, \xi) $ inside $\mathcal{M}_{\mathrm{conn}}(\alpha, r, \xi)$. To do that, the numerical contributions coming from the parabolic weights in the destabilizing condition need to be controlled. This leads to an inequality involving the multiplicities and weights at each parabolic point in \Cref{codim}. The following lemma is essential for the required estimate.

\begin{lemma}\label{key_lemma}
Fix non-negative integers $r,r_1$ with $r_1 \leq r$. Let $m_i \in \{0,1\}$ for $1 \leq i \leq r$ such that $\sum\limits_{i=1}^r m_i=r_1$. Let $0\le \alpha_1<\cdots<\alpha_r<1$. Then
\[
\sum\limits_{i=1}^r (r\,m_i-r_1)\alpha_i
\leq
\sum_{j=1}^{r}(1-m_j)\Bigl(r_1-\sum_{i=1}^{j}m_i\Bigr).
\]
\end{lemma}

\begin{proof}
Set
\[
w:=\sum_{i=1}^r (r\,m_i-r_1)\alpha_i, \qquad
c:=\sum_{j=1}^{r}(1-m_j)\Bigl(r_1-\sum_{i=1}^{j}m_i\Bigr).
\]
For $1\le t\le r-1$, define $q_t:=\sum\limits_{i=1}^t m_i$ and $\Delta_t:=\alpha_{t+1}-\alpha_t\ge 0$.
Using $\alpha_i=\alpha_1+\sum\limits_{t=1}^{i-1}\Delta_t$ and $\sum\limits_{i=1}^r(rm_i-r_1)=0$, we obtain
\begin{align*}
w
&= \sum_{i=1}^r (rm_i-r_1)\alpha_i
= \sum_{i=1}^r \sum_{t=1}^{i-1} (rm_i-r_1)\Delta_t
= \sum_{t=1}^{r-1}\Bigl(\sum_{i=t+1}^r (rm_i-r_1)\Bigr)\Delta_t.
\end{align*}
Since
\[
\sum_{i=t+1}^r (rm_i-r_1)=r(r_1-q_t)-r_1(r-t)=r_1 t-r q_t,
\]
it follows that
\[
w=\sum_{t=1}^{r-1}(r_1 t-r q_t)\Delta_t.
\]

On the other hand,
\[
c=\sum_{j=1}^r (1-m_j)(r_1-q_j).
\]
For any fixed $t$,
\[
c \ge \sum_{j=1}^t (1-m_j)(r_1-q_j)
\ge (r_1-q_t)\sum_{j=1}^t (1-m_j)
= (r_1-q_t)(t-q_t),
\]
as $q_j\le q_t$ whenever $j\le t$. Now
\[
(r_1-q_t)(t-q_t)-(r_1 t-r q_t)=q_t(q_t+r-r_1-t)\ge 0,
\]
Indeed $t-q_t\le r-r_1$ as $t-q_t$ is the number of $m_i$'s in the set $\{ m_1, \ldots , m_t\}$ which are zero and there is exactly $r-r_1$ many $m_i$ vanishes in the set $\{ m_1, \ldots , m_r\}$. It follows that $c\ge r_1 t-r q_t$ for all $t$.

Since $\Delta_t\ge 0$, we get
\[
w=\sum_{t=1}^{r-1}(r_1 t-r q_t)\Delta_t
\le c\sum_{t=1}^{r-1}\Delta_t
= c(\alpha_r-\alpha_1) < c.
\]
Thus $w\le c$, as required.
\end{proof}

We now estimate the dimension of the space of unstable parabolic bundles with fixed determinant. Any such bundle admits a destabilizing subbundle, and hence can be described as an extension of two smaller rank bundles. Using this description, we parametrize these bundles via extension data together with compatible parabolic structures, and obtain an upper bound on their dimension. This approach goes back to the work of Sun \cite{S00} and Biswas-Bhosle \cite{BB23}, and we follow the same strategy here.

Before we proceed further, we set up some notation. We use the standard Quot-scheme construction of the parameter space of framed vector bundles, as in \cite{S00,BB23}. Let us consider the Quot scheme of quotients of $\mathcal{O}_X^{\oplus N}$ with Hilbert polynomial
\[
P(m)=rm+d+r(1-g).
\]
Let $Quot$ denote the open subset of this Quot scheme consisting of locally free quotients. Let
\[
\mathcal{O}^{\oplus N}_{Quot \times X} \longrightarrow \mathcal{E} \longrightarrow 0
\]
be the universal quotient sheaf on $Quot \times X$. Let $R \subset Quot$ be the subset of quotients $\mathcal{E}$ such that
\[
H^1(\mathcal{E})=0,\qquad H^0(\mathcal{E})=\mathbb{C}^N.
\]
Then $R$ is irreducible and non-singular, and
\[
\dim R= r^2(g-1)+1+\dim \PGL(N),
\]
see \cite{S00,BB23}. Define
\[
Q_{par}:= \prod_{x\in D,\,Quot} \mathcal{F}lag_{\overline{n}(x)}\mathcal{E}_x
\]
to be the fiber product over $Quot$ of the relative flag schemes of type $\overline{n}(x)$. Let
\[
R_{par}\longrightarrow R
\]
be the restriction of $Q_{par}$ to $R$. Since relative flag varieties are smooth and irreducible over the base, it follows that $R_{par}$ is irreducible and non-singular. Let $R_{\xi,par}\subset R_{par}$ denote the subset consisting of parabolic vector bundle quotients $\mathcal{E}$ with determinant equal to the fixed line bundle $\xi$. Similarly, let
\[
R^{ss}_{\xi,par}\subset R_{\xi,par}
\]
denote the subset corresponding to semistable parabolic bundle quotients.

\begin{prop}\label{key_dim}
Let \(X\) be a smooth projective curve of genus \(g (\geq 2)\).
Let $W$ be the set of isomorphism classes of unstable full-flag parabolic bundles $E_{*}$ of fixed rank $r$ and fixed determinant $\xi$ (of degree $d$).
Then
\[
\dim W \le (r^2-r)(g-1)-1+\frac{n(r^2-r)}{2}.
\]
\end{prop}

\begin{proof}
The argument is standard and follows closely the dimension estimates of Sun and Biswas-Bhosle (cf. \cite{S00, BB23}). We briefly recall the steps that are required in our setting.

Let $E_{1,*}$ be a parabolic subbundle of rank $r_1$ and degree $d_1$ which destabilizes $E_*$. Then we have a short exact sequence
\begin{align}\label{ses}
0 \to E_{1,*} \to E_* \to E_{2,*} \to 0,
\end{align}
with
\[
\rank(E_2)=r_2:= r-r_1,\qquad \deg(E_2)=d_2:=d-d_1,
\]
and the parabolic structure on $E_{1,*}$ is induced from that of $E_*$. 

For each point $x \in D$, let 
\[
m_1(x), m_2(x), \ldots , m_{l_x}(x)
\]
be the nonzero multiplicities for the parabolic structure on $E_{1,*}$. Note that $m_i(x) \in \{0,1\}$ for all $1\le i \le r, x \in D$ as we consider full flag parabolic structure on $E_*$. 

The destabilizing condition gives
\[
\mu_{\mathrm{par}}(E_{1,*})>\mu_{\mathrm{par}}(E_*).
\]
If we set $A:=r_2d_1-r_1d_2$, the above inequality is equivalent to
\begin{equation}\label{eq:destab}
A+\sum_{x\in D}\sum_{i=1}^r (r\,m_i(x)-r_1)\alpha_i(x)>0.
\end{equation}

\noindent \textbf{Step I: Estimate the dimension of the space of vector bundle extensions of the type \eqref{ses} without parabolic structure.} Following \cite{BB23, S00}, we consider the Quot-scheme approach.

The underlying vector bundle $E$ can be thought of as a closed point of the Quot scheme $Quot$ as described earlier. Similarly, for the bundles $E_i$, let $Q^i$ denote the Quot schemes of quotients
\[
\mathcal{O}_X^{N_i} \longrightarrow E^i \longrightarrow 0.
\]
Clearly $N_1+N_2=N$. Let
\[
\mathcal{O}^{N_i}_{Q^i \times X} \longrightarrow \mathcal{E}^i \longrightarrow 0
\]
be the universal quotient sheaf on $Q^i \times X$.

Let $Q^i_F$ be the open subset of locally free quotients with $H^1(\mathcal{E}^i)=0$ and $H^0(\mathcal{E}^i)=\mathbb{C}^{N_i}$. Also let $\mathcal{E}^i$ denote the universal quotient on $X \times Q^i_F$.

Now set $Q:= Q_F^1 \times Q_F^2$ and define
\[
\mathcal{E}:= (\mathcal{E}^2)^{\vee} \otimes \mathcal{E}^1.
\]
Let $\phi: Q \times X \longrightarrow Q$ be the natural projection map.

Then for $h \geq 0$, the locally closed subsets
\[
Q_h:= \{ y \in Q \mid h^1(\phi^{-1}(y), \mathcal{E}|_{\phi^{-1}(y)})=h\}
\]
cover $Q$.

Since $R^1\phi_*\mathcal{E}$ is locally free of rank $h$ on $Q_h$, define varieties $P_h$ as follows:
\begin{enumerate}
\item if $h=0$, we take $P_h=Q$ and $\mathcal{E}^h= \mathcal{E}^1 \oplus \mathcal{E}^2$ on $X \times P_h$.
\item if $h>0$, we define $P_h$ to be $\mathbb{P}((R^1\phi_*\mathcal{E})^{\vee})$, and $\mathcal{E}^h$ to be the universal extension
\[
0 \longrightarrow \mathcal{E}^1 \otimes \mathcal{O}_{P_h}(1) \longrightarrow \mathcal{E}^h \longrightarrow \mathcal{E}^2 \longrightarrow 0
\]
on $X \times P_h$.
\end{enumerate}

Thus the quasi-projective variety $P_h$ parametrizes vector bundle extensions of the type \eqref{ses} (without parabolic structure). The dimension of the quasi-projective subvariety $\widetilde{P}_h \subset P_h$ parametrizing vector bundle extensions of type \eqref{ses} with $\det E \cong \xi$ is given by
\[
\dim \widetilde{P}_h= \dim P_h - g,
\]
as in \cite{BB23,S00}.

\noindent \textbf{Step II: Estimate the dimension of the space of vector bundle extensions of the type \eqref{ses}.} 
For each $x \in D$, let 
\[
u(x) := \left(r_1, d_1, h, m_1(x) , \ldots ,m_{l_x}(x) \right),
\]
and set $u:= \{u(x)\mid x \in D\}$, where the data satisfy \eqref{eq:destab}. Define the locally closed subvarieties
\[
S_{u(x)} \subset \mathcal{F}lag_{\overline{n}(x)}\mathcal{E}^h_x
\]
which are fibrations over $\widetilde{P}_h$ whose fibers $S^0_{u(x)}$ consist of flags
\[
E_x=F_1(E_x) \supset \cdots \supset F_{l_x}(E_x) \supset F_{l_x+1}(E_x)=0
\]
such that
\[
\dim \bigl(F_i(E_x) \cap (E_1)_x\bigr) =r_1- \sum_{j=1}^i m_j(x).
\]

Define
\[
S_u:= \prod_{x \in D} S_{u(x)}.
\]
Then the parabolic bundle extensions of the type \eqref{ses} with $\det E \cong \xi$ are parametrized by $S_u$.

Clearly,
\begin{equation}\label{S_u dimension}
\dim S_u \leq \dim P_h - g + \sum_{x \in D} \dim S^0_{u(x)}.
\end{equation}

It is well known (cf. \cite{S00, BB23}) that
\[
\dim Q^i \leq r_i^2(g-1)+1+ \dim \PGL(N_i).
\]

Now,
\begin{equation}\label{P_h dimension}
\dim P_h \leq 
\begin{cases}
(g-1)\sum r_i^2+2+\sum \dim \PGL(N_i), & \text{if } h=0,\\[4pt]
(g-1)\sum r_i^2+2+\sum \dim \PGL(N_i)+ h-1, & \text{if } h>0.
\end{cases}
\end{equation}

The number 
\[
h= \dim \Ext^1_X(\mathcal{E}^2_y,\mathcal{E}^1_y)
\]
is computed using Riemann--Roch, and it is given by
\begin{equation}\label{h-RR}
h= 
r_1r_2(g-1)-A+h^0(X,\Hom(E_2,E_1)).
\end{equation}

At each $x\in D$, the full flag variety in $E_x$ has dimension
\[
\frac{r(r-1)}{2}.
\]
By Lemma $5.1$ of \cite{S00}, the allowed flag locus at $x$ has dimension
\begin{equation}\label{S^0_u}
\dim S^0_{u(x)} = \frac{r(r-1)}{2}-c_x.
\end{equation}

Putting \eqref{P_h dimension}, \eqref{h-RR}, and \eqref{S^0_u} into \eqref{S_u dimension}, we obtain
\begin{equation*}
\dim S_u
\le
(r^2-r_1r_2)(g-1)+1-g-A+h^0(X,\Hom(E_2,E_1))+(N_1^2+N_2^2-2)
+\frac{n(r^2-r)}{2}-\sum_{x\in D}c_x.
\end{equation*}

\noindent\textbf{Step III: Reduction to parabolic bundles.} Let us denote $F_u$ to be the frame bundle of the direct image of $\mathcal{E}^{h}$ over $S_u$; it is a principal $\GL(N)$-bundle. This yields a natural map
\[
\psi_u:F_u\to R_{\xi,\mathrm{par}}\setminus R^{ss}_{\xi,\mathrm{par}}.
\]

As $u$ varies, the union of $\psi_u(F_u)$ cover $R_{\xi,\mathrm{par}}\setminus R^{ss}_{\xi,\mathrm{par}}$. Therefore, it is enough to estimate $\dim \psi_u(F_u)$ for each $u$ (see \cite{S00}, \cite{BB23}).

Let $c$ be the infimum of the dimensions of the fibers of $\psi_u$. Since $E= \mathcal{E}^h_y$ (for $y \in P_h$) is globally generated by sections, and elements of $\Aut(E)$ act non-trivially on $H^0(E)$, the following standard estimate holds (cf. \cite{BB23}): for any vector bundle extension \eqref{ses} (without parabolic structure), one has
\[
\dim \Aut(E) \geq h^0(X,\Hom(E_2,E_1)) + 1.
\]

Hence
\[
c \geq h^0(X,\Hom(E_2,E_1)) + N_1^2 + N_2^2 - 1.
\]

Therefore,
\begin{align*}
\dim \psi_u(F_u)
&\leq \dim F_u - c \\
&= \dim S_u + N^2 - c \\
&\leq N^2+ (r^2-r_1r_2)(g-1)-g-A+\frac{n(r^2-r)}2-\sum_{x\in D}c_x.
\end{align*}
Finally,
\[
 R_{\xi,\mathrm{par}}\setminus R^{ss}_{\xi,\mathrm{par}} \, \leq \, N^2+ (r^2-r_1r_2)(g-1)-g-A+\frac{n(r^2-r)}2-\sum_{x\in D}c_x
\]
\noindent \textbf{Step IV: Estimation of $\dim W$.} 
The isomorphism classes of parabolic bundles in $W$ are obtained by forgetting the choice of framing, which amounts to quotienting by the action of $\PGL(N)$. Hence,
\begin{equation}\label{dimension_1}
\dim W \leq (r^2-r_1r_2)(g-1)+1-g-A+\frac{n(r^2-r)}2-\sum_{x\in D}c_x.
\end{equation}
We now estimate the final term in the right-hand side of the preceding inequality using \eqref{eq:destab}.For each $x \in D$, set
\[
w_x:=\sum_{i=1}^r (r\,m_i(x)-r_1)\alpha_i(x).
\]

By Lemma~\ref{key_lemma}, applied pointwise on $D$, we have
\begin{equation}\label{eq:wx-cx}
w_x\leq c_x
\qquad\text{for every }x\in D.
\end{equation}
Combining \eqref{eq:wx-cx} with the destabilizing inequality \eqref{eq:destab}, we obtain
\[
A+\sum_{x\in D}c_x \ge A+\sum_{x\in D}w_x>0.
\]
Since the left-hand side is an integer, it follows that
\[
A+\sum_{x\in D}c_x\ge 1.
\]
We substitute this bound into \eqref{dimension_1} to obtain
\[
\dim W
\le
(r^2-r_1r_2)(g-1)-g+\frac{n(r^2-r)}{2}.
\]
Finally, using the inequality \(r_1r_2\ge r-1\), it is concluded that
\[
\dim W
\le
(r^2-r)(g-1)-1+\frac{n(r^2-r)}{2}.
\]
\end{proof}

 	We are now ready to estimate the codimension of the complement of the open set $\mathcal{U}_{\mathrm{conn}}(\alpha, r,\xi)$ in $\mathcal{M}_{\mathrm{conn}}(\alpha, r,\xi)$. Recall that $\mathcal{U}_{\mathrm{conn}}(\alpha, r,\xi)$  parametrizes all stable parabolic connection $(E_*, \nabla)$ such that the underlying parabolic bundle $E_*$ is stable. The codimension estimation was provided in \cite{BB23}, \cite[Section $3$]{MY20}, \cite[Section $5$]{S00} (also see \cite{B99, BM07} for non-parabolic case); however, we present here a refined version to derive our result.

       \begin{prop}\label{codim}
       Let $\mathcal{S} \subset \mathcal{M}_{\mathrm{conn}}(\alpha, r ,\xi)$ be the subspace of stable parabolic connections $(E_*,\nabla)$ such that $E_*$ is not stable, i.e.
  \[
  \mathcal{S} = \mathcal{M}_{\mathrm{conn}}(\alpha, r ,\xi) \setminus \mathcal{U}_{\mathrm{conn}}(\alpha, r ,\xi).
  \]
  Then the codimension of $\mathcal{S}$ in $\mathcal{M}_{\mathrm{conn}}(\alpha, r ,\xi)$ is given by
           \[
           \mathrm{codim}(\mathcal{S}) \geq (r-1)(g-1)+1.
           \]
           
           In particular, if $r\geq 2, g\geq 2$ then $\mathrm{codim}(\mathcal{S}) \geq 2$.
       \end{prop}
       \begin{proof}
        Consider any element $(E_*,\nabla) \in \mathcal{S}$ such that the underlying parabolic bundle $E_*$ is not stable. Since the weights are generic, it implies that $E_*$ is not semistable. Let 
           \[
           0 = E_*^0 \subset E_*^1 \subset E_*^2 \subset \cdots \subset E_*^{l-1} \subset E_*^l = E_*
           \]
           be the Harder-Narasimhan filtration of $E_*$. The set of all pairs $\{(\mathrm{rank}(E_*^i),\mathrm{pardeg}(E_*^i))\}_{i=1}^l \in \mathbb{Z} \times \mathbb{R}$ is called the \textit{Harder-Narasimhan polygon} of $E_*$ (see \cite[P. $173$]{S77}). Let $\mathcal{W}$ be the space of all isomorphism classes of parabolic bundles with fixed determinant $\xi$ whose  Harder-Narasimhan polygon (of length $l \ge 2$) coincides with that of the given parabolic bundle $E_*$. Then all the elements in $\mathcal{W}$ have Harder-Narasimhan filtration of the same length $l$; they are all unstable, in particular. In other words, $\mathcal{W}$ is contained in the set $W$, described as in \Cref{key_dim}. The dimension estimation of \Cref{key_dim} yields  
          
\[
\dim \mathcal{W} \leq (r^2-r)(g-1)-1+\frac{n(r^2-r)}{2}.
\]

           Let $\mathcal{V}_\mathrm{HN}(E_*)$ denote the subvariety of $\mathcal{M}_{\mathrm{conn}}(\alpha, r, \xi)$ which consists of all pairs of the form $(E_*',\nabla') \in \mathcal{M}_{\mathrm{conn}}(\alpha, r, \xi)$ whose Harder-Narasimhan polygon coincides with that of $E_*$. By a similar argument as in \cite[Proposition $1.9$]{N86}, it follows that this subset of $\mathcal{M}_{\mathrm{conn}}(\alpha, r, \xi)$ is constructible (cf. \cite[Proposition $10$, p. $182$]{S77}). Also
           \begin{align}\label{maximum}
          \begin{split}
             \dim \mathcal{V}_\mathrm{HN}(E_*) & \leq \dim \mathcal{W}+ \mathrm{dim} \big(\mathcal{P}\textit{Conn}(E_*)/\mathrm{Aut}(E_*)\big) \\
             &\leq (r^2-r)(g-1)-1+\frac{n(r^2-r)}{2} + (r^2-1)(g-1) + \frac{n(r^2-r)}{2}\\
             &= (2r^2-r-1)(g-1) -1 +n(r^2-r).
          \end{split}
          \end{align}
          Observe that the final bound does not depend on $E_*$. The similar argument as in \cite[Proposition $11$, p. $183$]{S77} yields that there are only finitely many Harder-Narasimhan polygons that appear for the parabolic bundles on $X$
in the bounded family over $\mathcal{M}_{\mathrm{conn}}(\alpha, r, \xi)$. Let $\mathcal{V}_\mathrm{HN}$ denote the union of subvarieties of the form $\mathcal{V}_\mathrm{HN}(E_*)$, where $(E_*,\nabla) \in \mathcal{S}$. Then 
\begin{align*}
    \dim \mathcal{V}_\mathrm{HN} \leq (2r^2-r-1)(g-1) -1 +n(r^2-r).
\end{align*}
Since $\dim \mathcal{M}_{\mathrm{conn}}(\alpha, r, \xi) = 2(r^2-1)(g-1) + n(r^2-r)$, we get 
          \begin{equation*}
              \dim \mathcal{M}_{\mathrm{conn}}(\alpha, r, \xi) - [(2r^2-r-1)(g-1) -1 +n(r^2-r)] = (r-1)(g-1) + 1.
          \end{equation*}
          Since $\mathcal{S} \subseteq \mathcal{V}_\mathrm{HN}$, we get
          \[
          \mathrm{codim}(\mathcal{S}) \geq \mathrm{codim}(\mathcal{V}_\mathrm{HN}) \geq (r-1)(g-1)+1.
          \]  
       \end{proof}

	\section{Fundamental Group and Homotopy Groups}
	\begin{theorem}\label{main}
		Let $X$ be a compact Riemann surface of genus $g \geq 2$ and let $\xi$ be a line bundle over $X$. Then the homotopy groups of the moduli space $\mathcal{M}(\alpha, r,\xi)$ of stable parabolic bundles of rank $r$ $(\geq 2)$ and determinant $\xi$ and the moduli space $\mathcal{M}_{\mathrm{conn}}(\alpha, r,\xi)$ of stable parabolic connections of rank $r$ $(\geq 2)$ and determinant $\xi$ are isomorphic, i.e.
  \[
  \pi_k(\mathcal{M}_{\mathrm{conn}}(\alpha, r, \xi)) \cong \pi_k(\mathcal{M}(\alpha, r, \xi))
  \]
  for all $k=1,\dots, 2(r-1)(g-1)$.
	\end{theorem}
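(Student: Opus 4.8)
The plan is to factor the comparison through the open subvariety $\mathcal{U}_{\mathrm{conn}}(\alpha,r,\xi)\subset\mathcal{M}_{\mathrm{conn}}(\alpha,r,\xi)$: I will show that $\pi\colon\mathcal{U}_{\mathrm{conn}}(\alpha,r,\xi)\to\mathcal{M}(\alpha,r,\xi)$ is a homotopy equivalence, while the open inclusion $\mathcal{U}_{\mathrm{conn}}(\alpha,r,\xi)\hookrightarrow\mathcal{M}_{\mathrm{conn}}(\alpha,r,\xi)$ is $\big(2(r-1)(g-1)+1\big)$-connected, and then compose.

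\textbf{Step 1: the forgetful map is a homotopy equivalence.} By the first lemma of \Cref{dimensioncomputation}, for each stable $E_*$ the fiber $\pi^{-1}(E_*)$ is an affine space over $H^0(X,\mathrm{SPEnd}_0(E_*)\otimes K(D))$, and by \Cref{dimension1} its dimension is independent of $E_*$. Moreover, for $E_*$ stable one has $H^1(X,\mathrm{SPEnd}_0(E_*)\otimes K(D))\cong H^0(X,\mathrm{PEnd}_0(E_*))^\vee=0$, so these vector spaces glue to a vector bundle $\mathcal{V}\to\mathcal{M}(\alpha,r,\xi)$ by cohomology and base change, and $\mathcal{U}_{\mathrm{conn}}(\alpha,r,\xi)$ is a torsor under $\mathcal{V}$. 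Such a torsor is Zariski-locally trivial, hence $\pi$ is a fiber bundle with contractible fibers $\cong\mathbb{C}^N$, so $\mathcal{U}_{\mathrm{conn}}(\alpha,r,\xi)$ deformation retracts onto $\mathcal{M}(\alpha,r,\xi)$ and
\[
\pi_k\big(\mathcal{U}_{\mathrm{conn}}(\alpha,r,\xi)\big)\;\cong\;\pi_k\big(\mathcal{M}(\alpha,r,\xi)\big)\qquad\text{for all }k\ge 0.
\]

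\textbf{Step 2: the complement has large codimension.} By \Cref{codim}, $\mathcal{S}=\mathcal{M}_{\mathrm{conn}}(\alpha,r,\xi)\setminus\mathcal{U}_{\mathrm{conn}}(\alpha,r,\xi)$ is a closed subvariety of complex codimension at least $(r-1)(g-1)+1$ in the smooth, irreducible variety $\mathcal{M}_{\mathrm{conn}}(\alpha,r,\xi)$, hence of real codimension at least $m:=2(r-1)(g-1)+2$. Fix a Whitney stratification $\mathcal{S}=\bigsqcup_\beta S_\beta$ into finitely many smooth locally closed subvarieties, each of real codimension $\ge m$ in $\mathcal{M}_{\mathrm{conn}}(\alpha,r,\xi)$. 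A standard general-position argument then shows that the pair $\big(\mathcal{M}_{\mathrm{conn}}(\alpha,r,\xi),\mathcal{U}_{\mathrm{conn}}(\alpha,r,\xi)\big)$ is $(m-1)$-connected: given a map $f\colon(D^j,S^{j-1})\to(\mathcal{M}_{\mathrm{conn}}(\alpha,r,\xi),\mathcal{U}_{\mathrm{conn}}(\alpha,r,\xi))$ with $j\le m-1$, approximate $f$ rel $S^{j-1}$ by a smooth map transverse to every stratum $S_\beta$; then each $f^{-1}(S_\beta)$ is a submanifold of $\mathrm{int}(D^j)$ of dimension $j-\mathrm{codim}_{\mathbb{R}}S_\beta\le j-m<0$, hence empty, so $f$ is homotopic rel $S^{j-1}$ into $\mathcal{U}_{\mathrm{conn}}(\alpha,r,\xi)$. (Equivalently, one may invoke the well-known fact that deleting a closed algebraic subset of complex codimension $c$ from a smooth variety leaves $\pi_k$ unchanged for $k<2c-1$.) By the long exact sequence of the pair, the inclusion induces isomorphisms $\pi_k(\mathcal{U}_{\mathrm{conn}}(\alpha,r,\xi))\xrightarrow{\ \sim\ }\pi_k(\mathcal{M}_{\mathrm{conn}}(\alpha,r,\xi))$ for $k\le m-2=2(r-1)(g-1)$ (and a surjection for $k=2(r-1)(g-1)+1$). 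Combining with Step 1 gives the theorem.

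\textbf{Expected main obstacle.} The substantive inputs — that $\pi$ is an affine bundle and that $\mathrm{codim}(\mathcal{S})\ge(r-1)(g-1)+1$ — are already in hand from \Cref{dimensioncomputation}. What remains to be handled carefully is the transversality step against the \emph{singular} set $\mathcal{S}$: one must legitimize ``general position'' with respect to a stratified space, which I would do either by inducting over the strata or by appealing to the fact that a generic smooth approximation of a map out of a manifold is simultaneously transverse to all strata of a Whitney stratification, together with the dimension count $j+\dim S_\beta<\dim\mathcal{M}_{\mathrm{conn}}(\alpha,r,\xi)$ forcing the intersection to be empty. Beyond that, one should also note that $\mathcal{M}_{\mathrm{conn}}(\alpha,r,\xi)$ and $\mathcal{M}(\alpha,r,\xi)$ are connected (being irreducible), which takes care of the case $k=0$.
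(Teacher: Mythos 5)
Your proposal is correct and follows essentially the same route as the paper: factor through $\mathcal{U}_{\mathrm{conn}}(\alpha,r,\xi)$, show the forgetful map is a torsor with contractible affine fibers (hence a homotopy equivalence), and use the codimension bound of \Cref{codim} to identify the homotopy groups of $\mathcal{U}_{\mathrm{conn}}(\alpha,r,\xi)$ and $\mathcal{M}_{\mathrm{conn}}(\alpha,r,\xi)$ in the stated range. You merely supply details the paper leaves implicit (local triviality of the torsor via vanishing of $H^1$, and the stratified transversality argument behind the codimension step), arriving at the same range $k\le 2(r-1)(g-1)$.
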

	\begin{proof}
	 We will show that
		\[
		\mathcal{U}_{\mathrm{conn}}(\alpha, r, \xi) = \mathcal{M}_{\mathrm{conn}}(\alpha, r, \xi) \setminus \mathcal{S}.
		\]
		 is a torsor over $\mathcal{M}(\alpha, r, \xi)$ and the fibers are contractible, so the moduli space $\mathcal{M}(\alpha, r, \xi)$ is a deformation retracts of $\mathcal{U}_{\mathrm{conn}}(\alpha, r, \xi)$. Consider the forgetful map $\pi$ given as in \eqref{forgetful}. By \Cref{dimension1}, it follows that the dimension of the fiber $\pi^{-1}(E_*)$ is constant, and thus the map $\pi$ is an equidimensional surjective morphism. From the deformation theory, we know that the tangent space of $\mathcal{M}(\alpha, r, \xi)$ at the point $E_*$ is given by
  \[
  T_{E_*}\mathcal{M}(\alpha, r, \xi) \cong H^1(X, \mathrm{PEnd}_0(E_*)).
  \]
By the parabolic version of the Serre duality, we get
  \[
  H^1(X, \mathrm{PEnd}_0(E_*)) \cong H^0(X, \mathrm{SPEnd}_0(E_*) \otimes K(D))^\vee.
  \]
  Thus the cotangent space of $\mathcal{M}(\alpha, r, \xi)$ at $E_*$ is given by 
  \[
  T^*_{E_*}\mathcal{M}(\alpha, r, \xi) \cong H^0(X, \mathrm{SPEnd}_0(E_*) \otimes K(D))
  \]
  Since the cotangent space $T^*_{E_*}\mathcal{M}(\alpha, r, \xi)$ acts on the fiber $\pi^{-1}(E_*)$ freely transitively, we conclude that $$ \pi : \mathcal{U}_{\mathrm{conn}}(\alpha, r, \xi) \longrightarrow \mathcal{M}(\alpha, r, \xi)$$ is a torsor over $\mathcal{M}(\alpha, r, \xi)$. Thus the fibers $\pi^{-1}(E_*)$ are connected and contractible. Consider the following exact sequence of homotopy groups
  \begin{equation}\label{exactseq}
  \cdots \rightarrow \pi_k(\pi^{-1}(E_*)) \rightarrow  \pi_k(\mathcal{U}_{\mathrm{conn}}(\alpha, r, \xi)) \rightarrow \pi_k(\mathcal{M}(\alpha, r, \xi)) \rightarrow   \pi_{k-1}(\pi^{-1}(E_*)) \rightarrow \cdots. 
  \end{equation}
 For $k=1$, since $\pi_1(\mathcal{M}(\alpha, r, \xi))$ is trivial (in fact, $\mathcal{M}(\alpha, r, \xi)$ rational as follows from \cite{BY99}) and the fibers are contractible, from the above sequence \eqref{exactseq} it follows that $\pi_1(\mathcal{U}_{\mathrm{conn}}(\alpha, r, \xi)) $ is also trivial. Thus, we get
  \[
    \pi_k(\mathcal{U}_{\mathrm{conn}}(\alpha, r, \xi)) \cong \pi_k(\mathcal{M}(\alpha, r, \xi))
  \]
  for all $k\geq 1$.

  Since the moduli space $\mathcal{M}_{\mathrm{conn}}(\alpha, r, \xi)$ is a smooth variety and by  \Cref{codim}, the codimension of the complement of $\mathcal{U}_{\mathrm{conn}}(\alpha, r, \xi)$ is given by $$\mathrm{codim}(\mathcal{S}) \geq (r-1)(g-1)+1,$$ we conclude that
  \[
  \pi_k(\mathcal{M}_{\mathrm{conn}}(\alpha, r, \xi)) \cong \pi_k(\mathcal{U}_{\mathrm{conn}}(\alpha, r, \xi)) \cong \pi_k(\mathcal{M}(\alpha, r, \xi))
  \]
  for every $k=1,\dots ,2(r-1)(g-1)$. 
\end{proof}
 \begin{corollary}\label{cor}
     Let $X$ be a compact Riemann surface of genus $g \geq 2$ and let $\xi$ be a line bundle over $X$. Then the moduli space $\mathcal{M}_{\mathrm{conn}}(\alpha, r,\xi)$ is simply connected.
     
     Moreover, if either $r \geq 3$ or $g \geq 3$, then we have
     \[
     \pi_2(\mathcal{M}_{\mathrm{conn}}(\alpha, r, \xi)) \cong H_2(\mathcal{M}(\alpha, r,\xi)).
     \]
 \end{corollary}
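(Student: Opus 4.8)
The plan is to obtain both assertions as essentially formal consequences of \Cref{main}, using as external input only the simple connectedness of the moduli space $\mathcal{M}(\alpha, r, \xi)$ of stable parabolic bundles with fixed determinant together with the Hurewicz theorem. No deep new argument is needed; the only care required is bookkeeping of which degrees fall into the comparison range of \Cref{main}.

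First I would settle simple connectedness. Since $r \ge 2$ and $g \ge 2$ we have $2(r-1)(g-1) \ge 2$, so \Cref{main} is available in degree $k = 1$ and gives $\pi_1(\mathcal{M}_{\mathrm{conn}}(\alpha, r, \xi)) \cong \pi_1(\mathcal{M}(\alpha, r, \xi))$; as $\mathcal{M}_{\mathrm{conn}}(\alpha, r, \xi)$ is irreducible it is path-connected, so it remains only to recall $\pi_1(\mathcal{M}(\alpha, r, \xi)) = 0$. The latter is classical: the determinant map $\det\colon \mathcal{M}(\alpha, r, d) \to \mathrm{Jac}^d(X)$ has fiber $\mathcal{M}(\alpha, r, \xi)$ and induces an isomorphism on $\pi_1$ (both sides being identified with $H_1(X,\mathbb{Z})$), so the homotopy exact sequence of the fibration forces $\pi_1(\mathcal{M}(\alpha, r, \xi)) = 0$; alternatively one quotes the unirationality of $\mathcal{M}(\alpha, r, \xi)$. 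This yields the first claim.

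For the second statement I would apply \Cref{main} in degree $k = 2$. The extra hypothesis ``$r \ge 3$ or $g \ge 3$'' is exactly the condition $(r-1)(g-1) \ge 2$, which makes $2(r-1)(g-1)$ strictly exceed $2$ and hence places degree $2$ safely inside the range in which, by the codimension estimate of \Cref{codim}, the inclusion $\mathcal{U}_{\mathrm{conn}}(\alpha, r, \xi) \hookrightarrow \mathcal{M}_{\mathrm{conn}}(\alpha, r, \xi)$ induces isomorphisms on homotopy groups. Thus $\pi_2(\mathcal{M}_{\mathrm{conn}}(\alpha, r, \xi)) \cong \pi_2(\mathcal{U}_{\mathrm{conn}}(\alpha, r, \xi)) \cong \pi_2(\mathcal{M}(\alpha, r, \xi))$, the second isomorphism coming from the affine-bundle (torsor) structure of $\pi$ established in the proof of \Cref{main}. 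Since $\mathcal{M}(\alpha, r, \xi)$ is simply connected by the previous paragraph, the Hurewicz homomorphism $\pi_2(\mathcal{M}(\alpha, r, \xi)) \to H_2(\mathcal{M}(\alpha, r, \xi),\mathbb{Z})$ is an isomorphism; composing it with the chain above gives $\pi_2(\mathcal{M}_{\mathrm{conn}}(\alpha, r, \xi)) \cong H_2(\mathcal{M}(\alpha, r, \xi))$, and tracing through the construction identifies this composite with the Hurewicz map transported along the isomorphisms of \Cref{main}.

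I do not expect a genuine obstacle here: everything is formal once \Cref{main} and \Cref{codim} are in hand. The single imported fact is the simple connectedness of $\mathcal{M}(\alpha, r, \xi)$, which is used both to launch the first part and to run the Hurewicz theorem in the second; and the only subtlety is the numerology — degree $1$ is automatically within range for the first assertion, whereas degree $2$ requires the hypothesis on $r$ and $g$, which is precisely why it is imposed there.
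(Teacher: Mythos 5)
Your proposal is correct and follows essentially the same route as the paper: both parts are deduced from Theorem \ref{main} together with the simple connectedness of $\mathcal{M}(\alpha,r,\xi)$ (which the paper gets from its rationality, per \cite{BY99} --- the same as your second suggested justification) and the Hurewicz theorem in degree $2$. The only cosmetic difference is your alternative fibration argument for $\pi_1(\mathcal{M}(\alpha,r,\xi))=0$ via the determinant map, which the paper does not use.
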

 \begin{proof}
    Since the moduli space $\mathcal{M}(\alpha, r,\xi)$ is simply connected, applying \Cref{main}, we conclude that $\mathcal{M}_{\mathrm{conn}}(\alpha, r, \xi))$ is simply connected as well. 

    For the second part, the assumptions on $r$ and $g$ produce
    \[
    2(r-1)(g-1) \geq 4.
    \]
Since $\pi_1(\mathcal{M}(\alpha, r,\xi))$ is trivial, the Hurewicz map
      \[
      \pi_2(\mathcal{M}(\alpha, r,\xi)) \longrightarrow H_2(\mathcal{M}(\alpha, r,\xi))
      \]
      is an isomorphism. Therefore, by \Cref{main}, we get
      \[
      \pi_2(\mathcal{M}_{\mathrm{conn}}(\alpha, r, \xi))) \cong H_2(\mathcal{M}(\alpha, r,\xi)).
      \]
\end{proof}

\begin{remark}
    Following a similar argument, we can also conclude that the moduli space of stable parabolic Higgs bundles over $X$ with determinant $\xi$ is simply connected. 
\end{remark}

\section{Pure and Mixed Hodge Structures}\label{mhs}
According to Deligne, \cite{D72, D75}, for any complex algebraic variety $Y$, the torsion-free part of the $i$-th cohomology space $H^i(Y,\mathbb{Z})$ with $\mathbb{Z}$-coefficients carries two finite filtrations by subspaces for all $i \geq 0$, the Hodge filtration $F$ and the weight filtration $W$. We will first recall the definition of mixed Hodge structures of a complex variety.

\begin{definition}
    A \textit{pure Hodge structure} of weight $k \in \mathbb{Z}$ consists of an abelian group $H_\mathbb{Z}$ and a decomposition of $H \coloneqq H_\mathbb{Z} \otimes \mathbb{C}$ into a direct sum of complex subspaces
    \[
    H = \bigoplus_{p+q=k} H^{p,q},
    \]
    such that $H^{q,p} = \overline{H^{p,q}}$.
\end{definition}
  \begin{definition}
      A \textit{mixed Hodge structure} on an abelian group $H_\mathbb{Z}$ is equipped with:
      \begin{enumerate}
          \item \textnormal{the Hodge filtration: a finite descending filtration}
          \[
          H = H_\mathbb{Z}\otimes \mathbb{C} = F^0 \supset F^1 \supset F^2 \cdots \supset F^m=0
          \]
          \item \textnormal{the weight filtration: a finite ascending filtration of} $H_\mathbb{Q} \coloneqq H_\mathbb{Z}\otimes \mathbb{Q}$
          \[
          0 =W^0 \subset W^1 \subset \cdots \subset W^l = H_\mathbb{Q},
          \]
          such that $F$ induces a pure Hodge structure of weight $k$ on every $$\mathrm{Gr}^k_W(H_\mathbb{Q}) = W^k/W^{k-1}.$$
      \end{enumerate}
  \end{definition}  
For any quasi-projective algebraic variety $Y$, the torsion-free part of the cohomology spaces $H^k(Y,\mathbb{Z})$ has mixed Hodge structure and if $Y$ is smooth and projective then the Hodge structure is pure of weight $k$ (cf. \cite{D72, D75}). We will show that the mixed Hodge structure on the torsion-free part of the $i$-th cohomology group $H^i(\mathcal{M}_{\mathrm{conn}}(\alpha, r, \xi)), \mathbb{Z})$ of the moduli space of stable parabolic connections $\mathcal{M}_{\mathrm{conn}}(\alpha, r, \xi))$ is pure of weight $i$ for all $i\leq 2(r-1)(g-1)+1$.

Let $\mathrm{T}^i(\mathcal{M}_{\mathrm{conn}})$ (respectively, $\mathrm{T}^i(\mathcal{M})$) denote the torsion of the $i$-th cohomology group $H^i(\mathcal{M}_{\mathrm{conn}}(\alpha, r, \xi),\mathbb{Z})$ (respectively, $H^i(\mathcal{M}(\alpha, r, \xi),\mathbb{Z})$,  where $\mathcal{M}(\alpha, r, \xi)$ is the moduli space of stable parabolic bundles of rank $r (\geq 2)$ over $X$ with fixed determinant $\xi$.
\begin{theorem}\label{mhstheorem}
Let $X$ be a compact Riemann surface of genus $g \geq 2$. The mixed Hodge structure on $H^i(\mathcal{M}_{\mathrm{conn}}(\alpha, r, \xi),\mathbb{Z})/\mathrm{T}^i(\mathcal{M}_{\mathrm{conn}})$ is isomorphic to the mixed Hodge structure on $H^i(\mathcal{M}(\alpha, r, \xi),\mathbb{Z})/\mathrm{T}^i(\mathcal{M})$ for all $i \leq 2(r-1)(g-1) + 1$.

\end{theorem}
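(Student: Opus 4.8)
The plan is to leverage the geometry already established: the open subvariety $\mathcal{U}_{\mathrm{conn}}(\alpha,r,\xi)\subset\mathcal{M}_{\mathrm{conn}}(\alpha,r,\xi)$ is a torsor over $\mathcal{M}(\alpha,r,\xi)$ under the cotangent bundle $T^*\mathcal{M}(\alpha,r,\xi)$, hence is an affine-bundle (in particular, an $\mathbb{A}^N$-bundle in the Zariski topology, or at least in the étale topology) over the smooth projective variety $\mathcal{M}(\alpha,r,\xi)$. The first step is to deduce that the pullback $\pi^*\colon H^i(\mathcal{M}(\alpha,r,\xi),\mathbb{Z})\to H^i(\mathcal{U}_{\mathrm{conn}}(\alpha,r,\xi),\mathbb{Z})$ is an isomorphism of mixed Hodge structures for all $i$. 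The isomorphism on cohomology groups follows from the Leray spectral sequence of an affine bundle (the fibers being contractible), and the compatibility with mixed Hodge structures is automatic because $\pi$ is a morphism of algebraic varieties, so $\pi^*$ is a morphism of mixed Hodge structures; an isomorphism of underlying groups that is a morphism of MHS is an isomorphism of MHS. Since $\mathcal{M}(\alpha,r,\xi)$ is smooth projective, $H^i(\mathcal{M}(\alpha,r,\xi),\mathbb{Z})/\mathrm{T}^i(\mathcal{M})$ carries a pure Hodge structure of weight $i$, and therefore so does $H^i(\mathcal{U}_{\mathrm{conn}}(\alpha,r,\xi),\mathbb{Z})/\mathrm{torsion}$.

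The second step is to compare $\mathcal{M}_{\mathrm{conn}}(\alpha,r,\xi)$ with its open subset $\mathcal{U}_{\mathrm{conn}}(\alpha,r,\xi)$. Write $\mathcal{M}_{\mathrm{conn}}$ for $\mathcal{M}_{\mathrm{conn}}(\alpha,r,\xi)$ and $\mathcal{S}$ for the closed complement, which by Lemma~\ref{codim} has complex codimension at least $(r-1)(g-1)+1$, i.e. real codimension at least $2(r-1)(g-1)+2$. Since $\mathcal{M}_{\mathrm{conn}}$ is a smooth variety of pure dimension, I would invoke the standard fact that the restriction map $H^i(\mathcal{M}_{\mathrm{conn}},\mathbb{Z})\to H^i(\mathcal{U}_{\mathrm{conn}},\mathbb{Z})$ is an isomorphism for $i<2\,\mathrm{codim}_{\mathbb{C}}(\mathcal{S})-1$ and injective for $i=2\,\mathrm{codim}_{\mathbb{C}}(\mathcal{S})-1$; this comes from the long exact sequence of the pair relating $H^i(\mathcal{M}_{\mathrm{conn}})$, $H^i(\mathcal{U}_{\mathrm{conn}})$ and the local cohomology $H^i_{\mathcal{S}}(\mathcal{M}_{\mathrm{conn}})$, together with the vanishing $H^i_{\mathcal{S}}(\mathcal{M}_{\mathrm{conn}},\mathbb{Z})=0$ for $i<2\,\mathrm{codim}_{\mathbb{C}}(\mathcal{S})$ (purity of local cohomology on a smooth variety, valid even when $\mathcal{S}$ is singular). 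With $\mathrm{codim}_{\mathbb{C}}(\mathcal{S})\ge (r-1)(g-1)+1$, this gives an isomorphism $H^i(\mathcal{M}_{\mathrm{conn}},\mathbb{Z})\xrightarrow{\sim} H^i(\mathcal{U}_{\mathrm{conn}},\mathbb{Z})$ for $i\le 2(r-1)(g-1)+1$. Again the restriction map is a morphism of mixed Hodge structures (being induced by the open inclusion of varieties), so in this range it is an isomorphism of mixed Hodge structures, which then transports the purity and the identification with $H^i(\mathcal{M}(\alpha,r,\xi))$ established in the first step. Passing to torsion-free quotients is harmless since isomorphisms of MHS respect torsion.

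Concatenating the two comparisons yields, for $i\le 2(r-1)(g-1)+1$, a chain of isomorphisms of mixed Hodge structures
\[
H^i(\mathcal{M}_{\mathrm{conn}}(\alpha,r,\xi),\mathbb{Z})/\mathrm{T}^i(\mathcal{M}_{\mathrm{conn}})\;\cong\;H^i(\mathcal{U}_{\mathrm{conn}}(\alpha,r,\xi),\mathbb{Z})/\mathrm{torsion}\;\cong\;H^i(\mathcal{M}(\alpha,r,\xi),\mathbb{Z})/\mathrm{T}^i(\mathcal{M}),
\]
which is exactly the assertion of the theorem, and as a byproduct shows the Hodge structure on the left is pure of weight $i$ in this range.

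The main obstacle I anticipate is the careful justification of the codimension estimate being applied to cohomology rather than homotopy: one needs the vanishing of local cohomology $H^i_{\mathcal{S}}(\mathcal{M}_{\mathrm{conn}},\mathbb{Z})$ below twice the codimension, and $\mathcal{S}$ is a priori singular and possibly reducible with components of different dimensions. The correct statement uses $\mathrm{codim}_{\mathbb{C}}(\mathcal{S})$ as the minimum over irreducible components, which Lemma~\ref{codim} bounds below uniformly, so the argument survives; but one must be a little careful to phrase the cohomological purity/Gysin argument for a not-necessarily-smooth closed subset of a smooth ambient space (this is standard — e.g. via a stratification by smooth locally closed pieces and induction, or via the semi-purity of the dualizing complex — but deserves an explicit citation). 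A secondary technical point is verifying that the affine bundle $\pi$ induces an isomorphism on cohomology \emph{with $\mathbb{Z}$-coefficients} and not merely rationally; this is fine because the fibers are affine spaces, hence have the $\mathbb{Z}$-cohomology of a point, and the Leray spectral sequence degenerates integrally.
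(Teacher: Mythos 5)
Your proposal follows essentially the same route as the paper: your first step (the torsor/affine-bundle structure of $\pi$ making $\pi^*$ an isomorphism of mixed Hodge structures, with purity inherited from the smooth projective $\mathcal{M}(\alpha,r,\xi)$) is identical, and your second step via local cohomology $H^i_{\mathcal S}$ is the same argument as the paper's long exact sequence of the pair $(\mathcal M_{\mathrm{conn}}(\alpha,r,\xi),\mathcal U_{\mathrm{conn}}(\alpha,r,\xi))$, since $H^i_{\mathcal S}(\mathcal M_{\mathrm{conn}}(\alpha,r,\xi);\mathbb Z)\cong H^i(\mathcal M_{\mathrm{conn}}(\alpha,r,\xi),\mathcal U_{\mathrm{conn}}(\alpha,r,\xi);\mathbb Z)$. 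Your remarks on why semi-purity still applies to a possibly singular, reducible $\mathcal S$ are in fact more careful than the paper's.

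There is one degree at which your own bookkeeping does not close: the top degree $i=2(r-1)(g-1)+1$. You correctly state that purity gives an isomorphism for $i<2c-1$ and only injectivity at $i=2c-1$, where $c=\operatorname{codim}_{\mathbb C}\mathcal S$. Lemma \ref{codim} guarantees only $c\ge(r-1)(g-1)+1$, so $2c-2\ge 2(r-1)(g-1)$ and your argument delivers the isomorphism only for $i\le 2(r-1)(g-1)$. At $i=2(r-1)(g-1)+1$ you get injectivity of the restriction map, but surjectivity requires the vanishing of the image of $H^{i}(\mathcal U_{\mathrm{conn}}(\alpha,r,\xi);\mathbb Z)$ in $H^{2c}_{\mathcal S}\cong H^{\mathrm{BM}}_{\dim_{\mathbb R}\mathcal S}(\mathcal S)$, a group which is nonzero whenever $\mathcal S$ has a component of codimension exactly $(r-1)(g-1)+1$. (The paper's own proof asserts vanishing of the relative cohomology up to degree $2(r-1)(g-1)+2=2c$, which purity alone does not give when the codimension bound is attained, so the same issue is latent there.) To close the top degree one needs either a strict codimension estimate $c\ge(r-1)(g-1)+2$ or an argument that the connecting homomorphism into $H^{2c}_{\mathcal S}$ vanishes, e.g.\ via injectivity of the cycle class map of $\mathcal S$ in $H^{2c}(\mathcal M_{\mathrm{conn}}(\alpha,r,\xi);\mathbb Z)$. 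Everything you write for $i\le 2(r-1)(g-1)$ is complete and matches the paper.
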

\begin{proof}
    Since $$\pi : \mathcal{U}_{\mathrm{conn}}(\alpha, r, \xi) \longrightarrow \mathcal{M}(\alpha, r, \xi)$$ is a torsor with contractible fibers, the induced morphism
    \begin{equation}\label{isom}
        \pi^* : H^i(\mathcal{M}(\alpha, r, \xi), \mathbb{Z}) \longrightarrow H^i(\mathcal{U}_{\mathrm{conn}}(\alpha, r, \xi), \mathbb{Z})
    \end{equation}
    is an isomorphism for all $i \geq 0$. This gives an isomorphism of two mixed Hodge structures of $H^i(\mathcal{M}(\alpha, r, \xi), \mathbb{Z})/\mathrm{T}^i(\mathcal{M})$ and $H^i(\mathcal{U}_{\mathrm{conn}}(\alpha, r, \xi), \mathbb{Z})/\mathrm{T}^i(\mathcal{U}_{\mathrm{conn}})$, where $\mathrm{T}^i(\mathcal{U}_{\mathrm{conn}})$ is the torsion part of $H^i(\mathcal{U}_{\mathrm{conn}}(\alpha, r, \xi), \mathbb{Z})$. The Hodge structure on $H^k(\mathcal{M}(\alpha, r, \xi), \mathbb{Z})/\mathrm{T}^k(\mathcal{M})$ is pure of weight $k$ as $\mathcal{M}(\alpha, r, \xi)$ is a smooth projective variety. Therefore, the Hodge structure on $H^k(\mathcal{U}_{\mathrm{conn}}(\alpha, r, \xi), \mathbb{Z})/\mathrm{T}^k(\mathcal{U}_{\mathrm{conn}})$ is also pure of weight $k$. We now consider the pair $$(\mathcal{M}_{\mathrm{conn}}(\alpha, r, \xi), \mathcal{U}_{\mathrm{conn}}(\alpha, r, \xi)).$$ Then the relative cohomologies of this pair give a long exact sequence
    \begin{alignat}{2}\label{les}
    \begin{split}
    \cdots &\rightarrow H^i(\mathcal{M}_{\mathrm{conn}}(\alpha, r, \xi), \mathcal{U}_{\mathrm{conn}}(\alpha, r, \xi), \mathbb{Z}) \rightarrow H^i(\mathcal{M}_{\mathrm{conn}}(\alpha, r, \xi), \mathbb{Z}) \xrightarrow{\varphi} H^i(\mathcal{U}_{\mathrm{conn}}(\alpha, r, \xi), \mathbb{Z}) \rightarrow \\ &\rightarrow H^{i+1}(\mathcal{M}_{\mathrm{conn}}(\alpha, r, \xi), \mathcal{U}_{\mathrm{conn}}(\alpha, r, \xi), \mathbb{Z}) \rightarrow \cdots, 
    \end{split}
    \end{alignat}
where $\varphi$ is induced from the inclusion map
\[
\mathrm{incl} :  \mathcal{U}_{\mathrm{conn}}(\alpha, r, \xi) \longrightarrow \mathcal{M}_{\mathrm{conn}}(\alpha, r, \xi).
\]
By \cite[p. $43$]{D75}, $\varphi$ is a homomorphism of mixed Hodge structures. From \Cref{codim}, it follows that the (complex) codimension 
\[
\mathrm{codim}(\mathcal{M}_{\mathrm{conn}}(\alpha, r, \xi) \setminus \mathcal{U}_{\mathrm{conn}}(\alpha, r, \xi)) \geq (r-1)(g-1) +1,
\]
Therefore,
\[
H^i(\mathcal{M}_{\mathrm{conn}}(\alpha, r, \xi), \mathcal{U}_{\mathrm{conn}}(\alpha, r, \xi), \mathbb{Z}) = 0
\]
for all $i \leq 2(r-1)(g-1) +2$. Thus, the homomorphism $\varphi$ in the sequence \eqref{les} is an isomorphism. Therefore, the composition
\[
\varphi^{-1} \circ \pi^* : H^i(\mathcal{M}(\alpha, r, \xi), \mathbb{Z}) \longrightarrow H^i(\mathcal{M}_{\mathrm{conn}}(\alpha, r, \xi), \mathbb{Z})
\]
is an isomorphism for all $i\leq 2(r-1)(g-1) +1$, where $\pi^*$ is as in \eqref{isom}. This gives the required isomorphism
\[
H^i(\mathcal{M}_{\mathrm{conn}}(\alpha, r, \xi),\mathbb{Z})/\mathrm{T}^i(\mathcal{M}_{\mathrm{conn}}) \cong H^i(\mathcal{M}(\alpha, r, \xi),\mathbb{Z})/\mathrm{T}^i(\mathcal{M})
\]
of mixed Hodge structures for all $i\leq 2(r-1)(g-1) +1$. 
\end{proof}

\section*{Acknowledgment}
The authors are grateful to Indranil Biswas for several helpful discussions. Both authors are supported by the INSPIRE Faculty Fellowship (Ref. Nos. IFA21-MA 161 and IFA22-MA 186) funded by the Department of Science and Technology, Government of India. The first author thanks the Indian Statistical Institute, Kolkata, for providing a stimulating research environment where a preliminary version of this work was completed.

\bibliography{ref} 
\bibliographystyle{siam}	
	
\end{document}